\definecolor{ffffff}{rgb}{1.,1.,1.}
\definecolor{cqcqcq}{rgb}{0.75,0.75,0.75}
\definecolor{grey}{rgb}{.7,.7,.7}
\renewcommand{\phi}{\varphi}
\newcommand{\Om}{\Omega}
\newcommand{\V}{\mathfrak{V}}
\newcommand{\K}{\mathfrak{K}}
\newcommand{\R}{\mathbb{R}}
\newcommand{\fr}{\partial}
\theoremstyle{plain}
\newtheorem{thm}{Theorem}[section]
\newtheorem*{thm*}{Theorem}
\newtheorem{lem}[thm]{Lemma}
\newtheorem{prop}[thm]{Proposition}
\newtheorem{cor}[thm]{Corollary}
\theoremstyle{definition}
\newtheorem{defin}[thm]{Definition}
\theoremstyle{remark}
\newtheorem{rem}[thm]{Remark}
\numberwithin{equation}{section}
\title[Isoperimetric and $p$-Cheeger sets]{Isoperimetric sets and $p$-Cheeger sets\\ are in bijection}
\author[M.~Caroccia]{Marco Caroccia}
\address[Marco Caroccia]{Politecnico di Milano,  Dipartimento di Matematica,  piazza Leonardo da Vinci 32, IT--20133 Milano}
\email{marco.caroccia@polimi.it}
\author[G.~Saracco]{Giorgio Saracco}
\address[Giorgio Saracco]{Dipartimento di Matematica, Universit\`a di Trento, via Sommarive 14, IT--38123 Povo-Trento}
\email{giorgio.saracco@unitn.it}%
\thanks{M.C.~and G.S.~are members of INdAM and have been partially supported by the INdAM--GNAMPA Project 2020 ``Problemi isoperimetrici con anisotropie'' (n.~prot.~U-UFMBAZ-2020-000798 15-04-2020) and the INdAM--GNAMPA Project 2022 ``Stime ottimali per alcuni funzionali di forma'' (codice CUP{\textunderscore}E55\-F22\-00\-02\-70\-001). G.S.~has also received funding from Universit\`a degli Studi di Trento (UNITN) under the Starting Grant Giovani Ricercatori 2021 project ``WeiCAp'', codice CUP{\textunderscore}E65\-F21\-00\-41\-60\-001.}
\subjclass[2020]{Primary: 49Q10. Secondary: 35P15, 53A10} %
\keywords{perimeter minimizer, prescribed mean curvature, p-Cheeger sets, isoperimetric profile}
\begin{document}

\begin{abstract}
Given an open, bounded, planar set $\Omega$, we consider its $p$-Cheeger sets and its isoperimetric sets. We study the set-valued map $\mathfrak{V}:[\sfrac12,+\infty)\rightarrow\mathcal{P}((0,|\Omega|])$ associating to each $p$ the set of volumes of $p$-Cheeger sets. We show that whenever $\Omega$ satisfies some geometric structural assumptions (convex sets are encompassed), the map is injective, and continuous in terms of $\Gamma$-convergence. Moreover, when restricted to $(\sfrac 12, 1)$ such a map is univalued and is in bijection with its image. As a consequence of our analysis we derive some fine boundary regularity result.
\end{abstract}

 \hspace{-3cm}
 {
 \begin{minipage}[t]{0.6\linewidth}
 \begin{scriptsize}
 \vspace{-3cm}
This is a pre-print of an article published in {J. Geom Anal.}. The final authenticated version is available online at: \href{https://doi.org/10.1007/s12220-022-01157-x}{https://doi.org/10.1007/s12220-022-01157-x}
 \end{scriptsize}
\end{minipage} 
}

\maketitle

\section{Introduction}

Let $\Omega$ be an open, bounded subset of $\R^2$, and let $p \ge \sfrac 12$. We define the $p$-Cheeger constant of $\Omega$ as follows
\begin{equation}\label{eq:p-Cheeger}
H(p):= 
\inf \left\{\,\frac{P(F)}{|F|^p}\,:\, F\subset \Omega\,,\, |F|>0 \,\right\},
\end{equation}
where $|F|$ stands for the standard Lebesgue measure of the Borel set $F$ and $P(F)$ for its distributional perimeter, and we refer to~\cite{Mag12book} for an introduction to the theory of sets of finite perimeter. We shall denote by $E_p$ any set attaining the infimum in~\eqref{eq:p-Cheeger} and call it a \emph{$p$-Cheeger set}, refer to \cref{def:pCheeg}. 

On the one hand, the choice $p=1$ corresponds to the classic Cheeger problem widely studied in literature. For a general overview we refer the reader to the surveys~\cite{Leo15, Par11}. The problem is by now well-understood in dimension $2$, where a formula to compute $H(1)$ and a geometric characterization of minimizers is available in a wide generality. We refer to~\cite{KL06} for convex sets, to~\cite{KP11, LP16} for strips, and to~\cite{LNS17, LS20} for the most general statement. In dimension $2$, additional properties have been proved when $\Omega$ enjoys a rotational symmetry~\cite{Can22}, and we also mention that a complete characterization of the Blaschke--Santal\'o diagram for the triplet Cheeger constant, perimeter and area of $\Omega$ has been recently obtained in~\cite{Fto20}, and for more general triplets in~\cite{FMP22}. Finally, some stability results in the planar case are available in~\cite{caroccia2015note}.

The Cheeger problem can be stated in general dimension $N$, but finer characterizations are missing. We here only mention~\cite{AC09, ACV05} that establish uniqueness and convexity of the minimizer whenever $\Omega$ is convex, and~\cite{BP21} that proves rotational symmetry of minimizers whenever $\Omega$ is a set of revolution, and~\cite{CAROCCIA20221} for some finer regularity results. Explicit characterization of minimizers is available for few sets, we refer to~\cite{KLV19}, and remark that the unique minimizer is unknown even for the unit cube.

Determining the constant $H(1)$ and the minimizers is a problem that attracted a lot of attention because it is related to many others, the most known being the Cheeger's inequality, through which $H(1)$ provides a bound from below to the first eigenvalue of the Dirichlet $p$-Laplacian, and we refer to the foundational paper~\cite{Che70} (originally stated in a Riemannian framework) and to more recent improved estimates~\cite{Fto21, Par17}. The constant also appears in other spectral problems, see, e.g.,~\cite{BP18, caroccia2017cheeger, caroccia2019cheeger}. We also refer to~\cite{FPSS22} for the extension of these spectral properties in the very general context of abstract measure spaces.

On the other hand, for $p=\sfrac 12$ the functional is scaling invariant, and it reduces to determining the cases of equality in the isoperimetric inequality. Hence, in this latter case, the only minimizers are all and only the balls contained in $\Om$. For this topic, we refer to the beautiful survey~\cite{Fus15}.

Up to our knowledge, problem~\eqref{eq:p-Cheeger} has been first studied in the range $(\sfrac 12, 1]$ in~\cite{Avi97} again in relation to spectral inequalities, and some quantitative inequalities have been later proved in~\cite{FMP09a}. We also refer to the recent paper~\cite{PS17} for a more geometric point of view. In this range of exponents the perimeter plays a stronger role and moving towards $\sfrac 12$ minimizers try to be as round as possible. Nevertheless, nothing prevents one from considering exponents beyond $1$, and the basic results of~\cite{PS17} still hold.

In this paper, we are interested in the following geometric point of view. Fixed $\Omega\subset \mathbb{R}^2$, we consider the isoperimetric problem
\[
I(V) := \inf\left\{\,P(F)\,:\, F\subset \Omega,\, |F|=V \,\right\}.
\]
The characterization of sets attaining $I(V)$ has been first fully carried out in~\cite{SZ97} in the planar, convex setting, and later on extended in~\cite{LS22} to a more general class. We also mention~\cite[Cor.~2.12]{Ind20} where a first partial result, namely, the convexity of sets $E$ attaining $I(V)$, in the planar, convex case has been proved for anisotropic perimeters.

It is rather easy to see that any $p$-Cheeger set $E_p$ attains $I(|E_p|)$. If $\Omega$ is not a ball, denoting with $R$ the \emph{inradius} of $\Omega$, one easily proves that $|E_p|>\pi R^2$, whenever $p> \sfrac 12$. It is reasonable to ask if, given any volume $V\in (\pi R^2, |\Omega|)$, one can find an exponent $p>\sfrac 12$ such that there exists a $p$-Cheeger set $E_p$ with such a volume.

We recall that for $p=1$ the class of Cheeger sets is closed with respect to countable unions and intersections, and this allows to define \emph{maximal and minimal Cheeger sets}, refer to~\cite[Sect.~2]{CCN10}. An alternate definition of maximal and of minimal Cheeger sets can be given in terms of their volumes, refer to~\cite[Def.~3.5]{LS22}. There, the authors define
\begin{align}
m(\Omega) 
&:= 
\inf\{\,|E_1|\,:\, \text{$E_1$ is a $1$-Cheeger set of $\Omega$}\,\}, 
\label{eq:mOm}
\\
M(\Omega) 
&:= 
\sup\{\,|E_1|\,:\, \text{$E_1$ is a $1$-Cheeger set of $\Omega$}\,\},
\label{eq:MOm}
\end{align}
and define, resp., a minimal, resp., maximal, $1$-Cheeger set as a $1$-Cheeger set attaining $m(\Omega)$, resp., $M(\Omega)$. Such sets exist, and we refer, e.g., to~\cite[Prop.~3.6]{LS22}. In general one has 
\begin{equation}\label{eq:volumi_cheeger}
\pi R^2 \le m(\Omega) \le M(\Omega) \le |\Omega|\,
\end{equation}
being the second and third inequalities trivial, and the first one a straightforward consequence of the isoperimetric inequality and the scaling properties of the ratio $P(E)/|E|$. If $\Omega$ is a ball, all inequalities in~\eqref{eq:volumi_cheeger} are actually equalities; otherwise the first one is strict. If $\Omega$ is convex the second inequality is an equality~\cite{AC09} but there are also nonconvex sets for which one has equality,  refer for instance to~\cite[Thm.~2.3]{LS20}. Finally, there are several sets for which the last inequality is an equality, refer to~\cite{Sar21}.

Our main result is that for a quite general class of planar sets $\Omega$, refer to \cref{def:omega_nonecks}, there exists a strictly increasing, continuous function (hence, a bijection)
\[
\mathfrak{V} : (\sfrac 12, 1) \to (\pi R^2, m(\Omega)),
\]
such that a set $E$ attains $H(p)$ if and only if it attains $I(\mathfrak{V}(p))$. The analog cannot be fully established in the supercritical regime $p>1$, since we are unable to prove that the volume of a $p$-Cheeger set is uniquely detemined by the exponent $p$. In this case, we can only show that it remains defined as a multivalued map from $p>1$ to the power set of the interval $(M(\Omega), |\Omega|]$, and that such a map is injective and has a particular continuity property.

This result is in the same spirit of~\cite{LS22}, where for the same class of sets the authors consider the problem of characterizing the sets attaining
\[
F(\kappa) := \inf \left\{\,P(A)-\kappa|A|\,:\, A\subset \Omega,\, |A|\ge \pi R^2 \,\right\}.
\]
The parameter $\kappa$ geometrically represents the curvature of $\partial E\cap \Omega$, where $E$ is any set attaining $F(\kappa)$. They prove that there exists a continuous increasing map
\[
\mathfrak{K} : (\pi R^2, |\Omega|) \to (R^{-1}, \bar \kappa),
\]
such that a set $E$ attains $I(V)$ if and only if it attains $F(\mathfrak{K}(V))$, where
\[
\bar \kappa := \inf\left\{\,\kappa > R^{-1}\,:\, \text{$\Omega$ minimizes $F(\kappa)$} \,\right\}.
\]
Visually, as the volume $V$ increases, isoperimetric sets ``invade'' $\Omega$ and they are found by ``cutting'' $\Omega$'s corners with arcs of larger curvature $\mathfrak{K}(V)$. Under some additional geometric assumption on the set $\Omega$, the map $\mathfrak{K}$ is strictly increasing, refer to~\cite[Cor.~4.3~(ii)]{LS22}, and thus it defines a bijection. We also remark that the image of the map $\mathfrak{K}$ of the interval $(\pi R^2, m(\Omega))$ is the interval $(R^{-1}, H(1))$, hence the composition
\[
\mathfrak{K} \circ \mathfrak{V} : (\sfrac 12, 1) \to (R^{-1}, H(1))
\]
is an increasing, continuous function, and under some additional hypotheses a bijection. Analogously, in the supercritical regime $p>1$, one has a multivalued map into the power set of the interval $\kappa>H(1)$.

As a consequence of the fact that any set $E$ attaining either $H(p)$ or $I(V)$ also attains $F(\kappa)$ for a suitable $\kappa$, by adapting the strategy of~\cite{CAROCCIA20221}, we prove a fine regularity result on the contact set $\partial E \cap \partial \Omega$, yielding a lower bound on its Hausdorff dimension.

\subsection{Organization of the paper}

The paper is organized as follows. In \cref{sct:Not}, we set some notation, and state the main results of our paper.  In \cref{sct:tools}, we recall some known results and prove some preliminary lemmas needed in the proof of our main \cref{thm:main}, whose proof is contained in \cref{sct:pf}. Finally, in \cref{sct:pfcor} we exploit our main result to prove \cref{cor:app}, that establishes some fine regularity results on the free boundary of sets attaining either $H(p)$ or $I(V)$ or $F(\kappa)$.

\section{Notation and main results}\label{sct:Not}

Given an open, bounded set $\Omega\subset \mathbb{R}^2$ we are interested in the following three functionals of geometric flavor.

\begin{defin}[Prescribed curvature sets]\label{def:pmc}
Let $\Omega\subset \mathbb{R}^2$ be open, and bounded, and let $R$ be its inradius. Given $\kappa\ge R^{-1}$, we say that a set $E\subseteq\Omega$ is a \textit{set of prescribed curvature $\kappa$ of $\Omega$} if it attains the infimum
\begin{equation}\label{eq:pmc}
F(\kappa)
:= 
\inf \left \{\,P(A)-\kappa|A|\,:\, A\subset \Omega,\, |A| \ge \pi R^2 \,\right\}.
\end{equation}
\end{defin}

\begin{defin}[Isoperimetric sets]\label{def:iso}
Let $\Omega\subset \mathbb{R}^2$ be open, and bounded, and let $R$ be its inradius. Given $V\ge \pi R^2$, we say that a set $E\subseteq\Omega$ is an \emph{isoperimetric set of volume $V$ of $\Omega$} if it attains the infimum
\begin{equation}\label{eq:iso}
I(V):=
\inf\{\,P(F)\,:\, F\subset \Omega\,, |F|=V\,\}.
\end{equation}
\end{defin}

\begin{defin}[$p$-Cheeger sets]\label{def:pCheeg}
Let $\Omega\subset \mathbb{R}^2$ be open, and bounded. Given $p\ge \sfrac 12$, we say that a set $E\subseteq \Omega$ is a \textit{$p$-Cheeger set of $\Omega$} if it attains the infimum
\begin{equation}\label{eq:pCheeg}
H(p):= 
\inf\left\{\,\frac{P(F)}{|F|^p}\,:\, F\subset \Omega\,, |F|>0\,\right\}.
\end{equation}
\end{defin}

The sets we are interested in are those with \emph{no necks of any radius}, a concept first introduced in~\cite[Def.~1.2]{LNS17}. The precise definition is as follows.

\begin{defin}[Sets with no necks of any radius]
\label{def:omega_nonecks}
Let $\Omega\subset \mathbb{R}^2$ be a Jordan domain, that is, the open region bounded by a Jordan curve. Assume that the $2$-dimensional Lebesgue measure of $\partial \Omega$ is zero, that is, the curve delimiting $\Omega$ is not space-filling. Denoting with $R$ the inradius of $\Omega$, we say that $\Omega$ has no necks of radius $r\le R$, if it has the following property:

$\bullet$ given any two balls $B_r(x_0), B_r(x_1) \subset \Omega$, there exists a continuous curve $\gamma:[0,1] \to \mathbb{R}^2$ such that
\[
\gamma(0)=x_0,\qquad \gamma(1)=x_1, \qquad B_r(\gamma(t))\subset \Omega,\, \forall t\in [0,1].
\]

We say that $\Omega$ has no necks of any radius if the above property holds for all $r\le R$.
\end{defin}

We remark that any convex set, and more generally any star-shaped set, is a set with no necks of any radius, but there are many sets that enjoy such a property, whose boundary can be quite wild, e.g., Koch snowflakes. We can now state our main theorem that holds for the sets just introduced.

\begin{thm}\label{thm:main}
Let $\Omega\subset \mathbb{R}^2$ be a set with no necks of any radius, let $R$ be its inradius, and assume that $\Omega$ is not a ball. Being $\mathcal{P}(A)$ the power set of $A$, define the multivalued map
\[
\mathfrak{V} : [\sfrac 12, +\infty) \to \mathcal{P}((0,|\Omega|]),
\]
by setting
\begin{equation}\label{eq:def_V(p)}
\mathfrak{V}(p) := \{\,V\,:\, \text{there exists a $p$-Cheeger set $E_p$ of $\Omega$ with $|E_p|=V$} \,\}.
\end{equation}
The following hold true:
\begin{itemize}
\item[1)] a set $E$ is a $p$-Cheeger set if and only if it is an isoperimetric set of volume $|E|=V\in \mathfrak{V}(p)$;
\item[2)] $\mathfrak{V}$ is injective, and continuous in the following sense: if $p_i \to p$, and $V_{p_i}\in \mathfrak{V}(p_i)$, then, up to subsequences, $V_{p_i} \to V$, with $V\in \mathfrak{V}(p)$;
\item[3)] one has
\begin{itemize}
\item[(i)] $\mathfrak{V}(\sfrac 12) =(0, \pi R^2]$;
\item[(ii)] the restriction of $\mathfrak{V}$ to the interval $(\sfrac 12, 1)$ is univalued, and in particular, it is a strictly increasing, continuous function, inducing a bijection
\[
\mathfrak{V} : (\sfrac 12, 1) \to (\pi R^2, m(\Omega));
\]
\item[(iii)] $\mathfrak{V}(1) =[m(\Omega), M(\Omega)]$;
\item[(iv)] if $p>1$,  then $\V(p)\subseteq (M(\Omega), |\Omega|]$;
\end{itemize} 
where $m(\Omega)$ and $M(\Omega)$ have been, respectively, defined in~\eqref{eq:mOm} and~\eqref{eq:MOm}.
\end{itemize} 
\end{thm}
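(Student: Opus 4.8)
The plan is to exploit the scaling and comparison structure that links the three functionals $H(p)$, $I(V)$, and $F(\kappa)$, and to reduce the bulk of the work to the already-understood map $\mathfrak{K}$ from \cite{LS22}. The key elementary observation is that for any competitor $F$ with $|F|=V$, the ratio $P(F)/V^p$ is minimized (for fixed $V$) exactly when $P(F)$ is minimized, so that $H(p)=\inf_{V>0} I(V)/V^p$ and any $p$-Cheeger set of volume $V$ is automatically an isoperimetric set of that volume. This gives the ``only if'' direction of part 1) for free; the ``if'' direction follows because, once $V\in\mathfrak{V}(p)$ is realized, any other isoperimetric set of the same volume $V$ has the same perimeter and hence the same ratio, so it too attains $H(p)$. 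The first-order condition for the one-dimensional minimization $V\mapsto I(V)/V^p$ is the crucial bridge: at an optimal $V$ one expects $I'(V)=p\,I(V)/V$, and since $I'(V)$ equals the curvature $\mathfrak{K}(V)$ of the free boundary of the isoperimetric set (a fact available from the $F(\kappa)$ analysis), this identity pins down $\mathfrak{K}(V)$ in terms of $p$ and the profile.

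\textbf{Injectivity and continuity (part 2).} I would first establish injectivity by the following monotonicity argument. Suppose $V\in\mathfrak{V}(p)$ and $V'\in\mathfrak{V}(p')$ with $p<p'$; I want to rule out $V=V'$ for $p\neq p'$, and more strongly to order the volumes. Using $H(p)=I(V)/V^p$ at the respective optima together with the strict convexity/monotonicity properties of the isoperimetric profile $I$ inherited from \cite{LS22}, a standard comparison of the two optimality relations $I(V)/V^p\le I(V')/(V')^p$ and the reverse at $p'$ yields that the realized volumes cannot coincide across distinct exponents, giving injectivity of the set-valued map. For the continuity statement, I would take $p_i\to p$ and $V_{p_i}\in\mathfrak{V}(p_i)$; since all volumes lie in the compact interval $[\pi R^2,|\Omega|]$, a subsequence converges to some $V$. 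Passing to the limit in the optimality inequality $I(V_{p_i})/V_{p_i}^{p_i}\le I(W)/W^{p}$ for every competitor volume $W$, using continuity of $I$ (which follows from the structure theory, e.g. continuity of $\mathfrak{K}$), shows $V$ realizes $H(p)$, hence $V\in\mathfrak{V}(p)$.

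\textbf{Explicit description of the image (part 3).} For (i), when $p=\sfrac12$ the functional is scale-invariant and the only minimizers are balls in $\Omega$; every ball of radius $r\le R$ attains the isoperimetric ratio and has volume ranging over $(0,\pi R^2]$, giving the stated image. For (iii) and (iv), I would relate the extremal volumes $m(\Omega)$ and $M(\Omega)$ of $1$-Cheeger sets to the thresholds of $\mathfrak{K}$: since the image of $\mathfrak{K}$ on $(\pi R^2,m(\Omega))$ is $(R^{-1},H(1))$ as recalled in the introduction, the value $p=1$ corresponds exactly to curvature $\kappa=H(1)$, whose isoperimetric sets range over the full band $[m(\Omega),M(\Omega)]$ of $1$-Cheeger volumes, while $p>1$ forces $\kappa>H(1)$ and hence $V>M(\Omega)$. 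Part (ii) is the heart of the theorem: on $(\sfrac12,1)$ I would show $\mathfrak{V}$ is univalued and strictly increasing by composing with $\mathfrak{K}$. Concretely, the optimality identity $\mathfrak{K}(V)=p\,I(V)/V$ together with the strict monotonicity of $\mathfrak{K}$ (available under the no-necks hypothesis from \cite[Cor.~4.3~(ii)]{LS22}) lets me define $p\mapsto V$ as the unique solution of a monotone scalar equation, and the implicit-function/monotone-inversion argument yields that $\mathfrak{V}$ is a strictly increasing continuous bijection onto $(\pi R^2,m(\Omega))$.

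\textbf{Main obstacle.} I expect the hardest step to be establishing that the restriction to $(\sfrac12,1)$ is genuinely \emph{univalued}, i.e. that for each such $p$ there is a unique volume of $p$-Cheeger set. This is precisely the feature that fails for $p>1$ (where the authors admit they cannot prove uniqueness of the volume), so the argument must use something special to the subcritical range: the strict monotonicity of $\mathfrak{K}$ on the relevant interval and the strict convexity behavior of the isoperimetric profile below $m(\Omega)$. Making the scalar equation $\mathfrak{K}(V)=p\,I(V)/V$ have a unique solution requires controlling the interplay between the growth of $I(V)/V$ and the curvature map carefully; any plateau of $\mathfrak{K}$ would break uniqueness, so the no-necks hypothesis and the results imported from \cite{LS22} are indispensable exactly here.
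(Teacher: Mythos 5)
There is a genuine gap at what you correctly identify as the heart of the theorem, part 3)(ii). You invoke the strict monotonicity of $\K$ as ``available under the no-necks hypothesis from \cite[Cor.~4.3~(ii)]{LS22}'', but that corollary holds only under \emph{additional} geometric assumptions on $\Omega$: under the hypotheses of \cref{thm:main} one only has the weak monotonicity of \cref{thm:structure_LS22}~(iii), and plateaus of $\K$ genuinely occur for admissible sets --- for a rectangle (convex, hence with no necks of any radius) the preimage $\K^{-1}(\sfrac 1R)$ is a nondegenerate interval of volumes, as the paper itself points out in a remark after the proof of \cref{cor:app}. You concede that ``any plateau of $\mathfrak{K}$ would break uniqueness'' in your scheme of inverting the scalar equation $\K(V)=p\,I(V)/V$; since plateaus occur, your route does not merely need more care, it fails on sets the theorem covers. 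The paper's proof of univaluedness (\cref{prop:unival}) needs only weak monotonicity: by the curvature formula $\kappa_{E_p}=pH(p)|E_p|^{p-1}$ of~\eqref{eq:curvature_p_Cheeger}, two $p$-Cheeger sets with volumes $V_1<V_2$ satisfy $\kappa_{E^1_p}V_1^{1-p}=\kappa_{E^2_p}V_2^{1-p}$, and $p<1$ forces the strict inequality $\kappa_{E^1_p}>\kappa_{E^2_p}$, contradicting $\K(V_2)\ge\K(V_1)$. Equivalently, $\K(V)V^{1-p}$ is strictly increasing for $p<1$ even across plateaus of $\K$, since $V^{1-p}$ strictly increases while $\K\ge \sfrac 1R>0$ is nondecreasing; this is exactly the mechanism that breaks down for $p>1$, where the paper cannot prove univaluedness. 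Strict monotonicity and surjectivity of $\V$ on $(\sfrac12,1)$ are then obtained softly: a continuous injective real function is strictly monotone, and the endpoint limits $\V(p)\to\pi R^2$ as $p\to\sfrac12^+$ and $\V(p)\to m(\Omega)$ as $p\to 1^-$ (via $\Gamma$-convergence and the bounds of \cref{prop:p_to_V}) fix the direction and the image.

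A secondary weakness: your injectivity argument in part 2) (``a standard comparison of the two optimality relations'') does not suffice as stated. The two minimality inequalities $I(V)/V^{p_1}\le I(W)/W^{p_1}$ and $I(V)/V^{p_2}\le I(W)/W^{p_2}$ for all $W$ are not mutually contradictory at a common $V$: in logarithmic variables a corner of the profile would allow a whole interval of exponents to share the same optimal volume. What rules this out is the \emph{exact} first-order information: the free boundary of a $p$-Cheeger set has curvature precisely $pH(p)|E|^{p-1}=p\,P(E)/|E|$ (a set-variational fact from~\cite{PS17}, not an Euler--Lagrange equation for the scalar problem --- note $I$ is not known a priori to be differentiable, so your ``one expects $I'(V)=pI(V)/V$'' is only heuristic), and by \cref{thm:structure_LS22}~(ii) this curvature equals $\K(V)$, so that $p=\K(V)V/I(V)$ is determined by $V$; this is \cref{lem:p->V_injective}. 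The rest of your plan is sound and close to the paper's in substance: part 1) via ``same volume hence same perimeter'' is the paper's argument; your continuity proof for part 2) at the level of the profile (compactness of volumes in $[\pi R^2,|\Omega|]$, passage to the limit in the optimality inequality) is a legitimate, slightly more elementary alternative to the paper's $\Gamma$-convergence of the Rayleigh quotients, provided you justify the continuity of $I$, e.g.\ from its convexity on $[\pi R^2,|\Omega|)$ granted by~\cite{LS22}; and parts 3)(i), (iii), (iv) via the thresholds of $\K$ at $H(1)$ match \cref{prop:p_to_V} and \cref{thm:structure_LS22}~(iv).
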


\begin{rem}
If $\Omega$ is a ball, one has $\pi R^2 = m(\Omega)=M(\Omega)=|\Omega|$, and the above theorem reads as follows. Any set with volume $V<\pi R^2$ is isoperimetric if and only if it is a $\sfrac 12$-Cheeger set, while the whole $\Omega$ is a $p$-Cheeger set for any $p\ge \sfrac 12$. Hence, $\mathfrak{V}$ maps $\sfrac 12$ in the interval $(0, \pi R^2]$, and any $p>\sfrac 12$ in the singleton $\pi R^2$.
\end{rem}

As a consequence of \cref{thm:main} and of~\cite{LS22} establishing a connection between isoperimetric sets and sets with prescribed curvature, which we sum up in \cref{thm:structure_LS22},  we can prove the following corollary.

 \begin{cor}\label{cor:app}
Let $\alpha\in (0,1]$ and $\Omega\subset \R^2$ be a set with no necks of any radius with $C^{1,\alpha}$ boundary and let $R$ be its inradius. Assume that $\Omega$ is not a ball and that $E\subseteq \Omega$ attains
\begin{itemize}
\item[a)] either $F(\kappa)$ for $\kappa > \sfrac 1R$,
\item[b)] or $F(\kappa)$ for $\kappa = \sfrac 1R$ and $E$ is not a ball,
\item[c)] or $I(V)$ for $V> \pi R^2$,
\item[d)] or $H(p)$ for $p > \sfrac 12 $.
\end{itemize}
Then, around any $x\in \partial E\cap \partial \Omega$ the set $E$ has boundary of class $C^{1,\alpha}$, and 
\[
\mathcal{H}^{\alpha}(\partial E\cap \partial \Omega)>0,
\]
where $\mathcal{H}^{\alpha}$ stands for the $\alpha$-dimensional Hausdorff measure.
\end{cor}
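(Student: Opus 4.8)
The plan is to reduce all four cases to a single one, namely that $E$ attains $F(\kappa)$ for some $\kappa\ge R^{-1}$ and is not a ball, and then to run an obstacle-type boundary regularity argument together with a total-curvature (turning) computation.

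First I would carry out the reduction. Cases (a) and (b) are already of the desired form. For (c), by \cref{thm:structure_LS22} an isoperimetric set of volume $V>\pi R^2$ attains $F(\mathfrak{K}(V))$ with $\mathfrak{K}(V)>R^{-1}$; for (d), by \cref{thm:main} a $p$-Cheeger set with $p>\sfrac12$ is isoperimetric of some volume $V\in\mathfrak{V}(p)$, and since $\Omega$ is not a ball one has $\mathfrak{V}(p)\subseteq(\pi R^2,|\Omega|]$ for every $p>\sfrac12$ by parts 3(ii)--(iv) of \cref{thm:main}, so (d) reduces to (c). Hence in every case $E$ attains $F(\kappa)$ with $\kappa\ge R^{-1}$, and $E$ is never a ball: in (b) this is assumed, while if $\kappa>R^{-1}$ a ball attaining $F(\kappa)$ would have free curvature $\kappa$, hence radius $\kappa^{-1}<R$ and area $<\pi R^2$, violating the constraint $|A|\ge\pi R^2$. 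From the characterization of prescribed-curvature sets, $\partial E\cap\Omega$ is a union of circular arcs of curvature $\kappa$, each bending towards the interior of $E$, while on the contact set $C:=\partial E\cap\partial\Omega$ one has $\partial E=\partial\Omega$; in particular $C$ lies on the convex part of $\partial\Omega$ (on a concave portion the curvature-$\kappa$ free boundary would detach), so that $E$ is convex and $\partial E$ is a single Jordan curve.

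Next I would establish the local $C^{1,\alpha}$ regularity. Around a contact point $x\in C$ I would write, in suitable coordinates, $\partial\Omega$ as the graph of a $C^{1,\alpha}$ function and $\partial E$ as the graph of a function lying below it and minimizing the prescribed-curvature functional subject to this one-sided obstacle constraint. This is exactly a \emph{one-sided obstacle problem} for a prescribed-curvature operator, and, adapting the barrier and comparison arguments of \cite{CAROCCIA20221} from the Cheeger setting to the present functional with general $\kappa$, one obtains that $\partial E$ is $C^{1,\alpha}$ near $x$, with the same Hölder exponent as the obstacle. Since on the free part $\partial E$ is smooth, this yields that $\partial E$ is globally of class $C^{1,\alpha}$, which is the first assertion. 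I expect this obstacle-regularity step, controlling the transition across the contact set, to be the main technical obstacle.

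Finally I would prove $\mathcal{H}^\alpha(C)>0$ by a turning argument. Parametrizing $\partial E$ by arc length, let $\theta$ be its tangent angle; by the previous step $\theta\in C^{0,\alpha}$ with some seminorm $\Lambda$, and by convexity $\theta$ is non-decreasing, so $\mu:=\de\theta$ is a non-negative measure with $\mu(\partial E)=2\pi$. On $F:=\partial E\cap\Omega$ one has $\de\theta=\kappa\,\de\mathcal{H}^1$, whence $\mu(C)=2\pi-\kappa\,\mathcal{H}^1(F)$. If $\mu(C)=0$ and $\mathcal{H}^1(C)=0$, then $\mu=\kappa\,\mathcal{H}^1$ on all of $\partial E$, so $\theta$ is affine with slope $\kappa$ and $\partial E$ is a circle of radius $\kappa^{-1}$, i.e. $E$ is a ball, a contradiction; hence either $\mathcal{H}^1(C)>0$, in which case $\mathcal{H}^\alpha(C)>0$ trivially since $\alpha\le 1$, or $\mu(C)>0$. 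In the latter case, for any cover of $C$ by arc-length intervals $\{I_j\}$, monotonicity of $\theta$ and its $\alpha$-Hölder bound give $\mu(I_j)=\theta(\sup I_j)-\theta(\inf I_j)\le\Lambda|I_j|^\alpha$, so $\mu(C)\le\sum_j\mu(I_j)\le\Lambda\sum_j|I_j|^\alpha$; taking the infimum over covers yields $\mu(C)\le\Lambda\,\mathcal{H}^\alpha(C)$, and therefore $\mathcal{H}^\alpha(C)\ge\mu(C)/\Lambda>0$, as claimed.
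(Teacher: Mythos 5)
Your reduction of the four cases and your obstacle-problem step for the $C^{1,\alpha}$ regularity match the paper's Steps one and three (the paper likewise writes $\partial E$ and $\partial\Omega$ as graphs near a contact point, views $f_E$ as a constrained minimizer of the prescribed-curvature functional, and adapts \cite[Lem.~5.1]{CAROCCIA20221}). The genuine gap is in your final turning argument, and it sits exactly where you replace the paper's key lemma by an elementary computation: you assert that the contact set ``lies on the convex part of $\partial\Omega$'' and hence that $E$ is convex. This is unjustified and false in general. A set with no necks of any radius and $C^{1,\alpha}$ boundary (\cref{def:omega_nonecks}) can be badly nonconvex, and the structure theory of~\cite{LS22} shows that minimizers of $F(\kappa)$ are obtained by rolling a ball of radius $\kappa^{-1}$ inside $\Omega$; their contact set contains every boundary point touched by such an inner tangent ball, including points on \emph{concave} arcs (think of the indentation of a bean-shaped domain). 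Once $E$ is not convex, the tangent angle $\theta$ is not monotone and $\mu=\de\theta$ is only a signed measure. Your cover estimate then yields, from $\mathcal{H}^\alpha(C)=0$, only $\mu(C)=0$ and $\mathcal{H}^1(C)=0$, \emph{not} $|\mu|(C)=0$: a $C^{0,\alpha}$ function can have a derivative whose signed singular part is concentrated on an $\mathcal{H}^\alpha$-null set, with positive and negative parts cancelling on every interval, so you cannot conclude that $\theta$ is affine with slope $\kappa$, i.e.\ that $E$ is a ball. This removability statement is precisely the content of the result the paper invokes at this point, namely \cite[Thm.~4.1]{CAROCCIA20221} (constant curvature off an $\mathcal{H}^\alpha$-null set of a $C^{1,\alpha}$ curve propagates to constant curvature everywhere); your turning argument is a valid elementary substitute only when $E$ is known to be convex, e.g.\ for convex $\Omega$.

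A secondary, fixable error occurs in the reduction of case c): you claim $\mathfrak{K}(V)>R^{-1}$ for all $V>\pi R^2$, but the map $\K$ of \cref{thm:structure_LS22} is only nondecreasing, and for instance for a rectangle one has $\K(V)=\sfrac 1R$ on a nontrivial interval of volumes above $\pi R^2$ --- this is exactly why case b) appears in the statement, as the paper remarks after its proof. The repair is immediate: when $\K(V)=\sfrac 1R$, the set $E$ cannot be a ball since $|E|=V>\pi R^2$ exceeds the inball volume, so one lands in case b). Relatedly, your ``$E$ is never a ball when $\kappa>\sfrac 1R$'' argument tacitly assumes the ball has nonempty free boundary; if $\partial B\cap\Omega=\emptyset$, simple connectedness forces $B=\Omega$, which contradicts the hypothesis that $\Omega$ is not a ball --- this is the paper's \cref{lem:omega_is_a_ball}, and it should be spelled out.
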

To prove this result we show how one can adapt the techniques and the strategy adopted in~\cite{CAROCCIA20221} for sets attaining $F(H(1))$ to those attaining $F(\kappa)$ for a general curvature $\kappa\ge R^{-1}$.

\begin{rem}
The equality in case c) would immediately imply that $E$ is an inball of $\Omega$, while the equality in case d) that $E$ is a ball contained in $\Omega$. While this would imply the regularity of $\partial E$, it would not be enough to infer anything on $\mathcal{H}^{\alpha}(\partial E\cap \partial \Omega)$.
\end{rem}

\begin{rem}
If $\Omega$ were a ball, then for any $\kappa\ge \sfrac 1R$  (resp., for any $p>\sfrac 12$) the only set attaining $F(\kappa)$ (resp., $H(p)$) would be $\Omega$ itself. Thus, the conclusion $\mathcal{H}^{\alpha}(\partial E\cap \partial \Omega)>0$ trivially follows. We also notice that assuming c) the statement would be emptily true, since $V$ is chosen greater than $\pi R^2$.
\end{rem}

\begin{rem}
It is reasonable to think that an analog of \cref{thm:main} and of \cref{cor:app} might hold in higher dimension, at least for convex $\Omega$, by replacing the exponent $\sfrac 12$ with the isoperimetric one $\sfrac{(N-1)}{N}$. On the one hand point~1) of \cref{thm:main} holds true with the same proof (and without any geometric assumptions on $\Omega$). The proofs of the remaining points heavily rely on the results of~\cite{LS22}, which extensively use the fact that the only curves with constant curvature are union of arcs of circle with the given curvature.
On the other hand, the regularity result of \cref{cor:app} holds for a higher dimensional $\Omega$ for sets attaining $F(\kappa)$, with $\kappa \ge \sfrac{(N-1)}{R}$, and, thanks to \cite[Sect.~4]{ACV05}, for sets attaining $I(V)$ with $V\ge m(\Omega)$ when $\Omega$ is convex and $C^{1,1}$-regular. The Hausdorff dimension to be considered though should be $N-2+\alpha$, refer to~\cite{CAROCCIA20221}.
\end{rem}

\section{Tools}\label{sct:tools}

\begin{prop}
\label{prop:properties_p_Cheeger}
Let $\Omega$ be a bounded, open set in $\R^2$. For all $p \ge \sfrac 12$, there exist $p$-Cheeger sets $E_p$, and the boundaries $\fr E_p \cap \Omega$ are union of arcs of circles of curvature
\begin{equation}
\label{eq:curvature_p_Cheeger}
\kappa_{E_p} 
= 
pH(p) |E_p|^{p-1}.
\end{equation}
Moreover, for $p>\sfrac 12$, the volume of any $p$-Cheeger set is at least $\pi R^2$, where $R$ is the \emph{inradius} of $\Omega$.
\end{prop}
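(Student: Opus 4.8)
The plan is to argue in three steps: existence by the direct method, the curvature characterization via almost-minimality of the perimeter and the Euler--Lagrange equation, and finally the volume lower bound via the isoperimetric inequality.

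\textbf{Existence.} I would first dispose of $p=\sfrac12$ separately: there the ratio $P(F)/|F|^{\sfrac12}$ is scale invariant, and the isoperimetric inequality $P(F)\ge 2\sqrt\pi\,|F|^{\sfrac12}$, with equality exactly on balls, gives $H(\sfrac12)=2\sqrt\pi$ and shows that every inball of $\Omega$ is a minimizer. For $p>\sfrac12$ I would take a minimizing sequence $F_n\subset\Omega$ with $P(F_n)/|F_n|^p\to H(p)$. The crucial a priori estimate comes again from the isoperimetric inequality: for $n$ large $2\sqrt\pi\,|F_n|^{\sfrac12-p}\le P(F_n)/|F_n|^p\le H(p)+1$, and since $\sfrac12-p<0$ this bounds $|F_n|$ away from $0$; together with $|F_n|\le|\Omega|$ it also bounds $P(F_n)\le (H(p)+1)|\Omega|^p$. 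Compactness for sets of finite perimeter then yields, up to a subsequence, $F_n\to E_p$ in $L^1$ with $E_p\subseteq\Omega$, and lower semicontinuity of the perimeter together with continuity of the volume give $P(E_p)/|E_p|^p\le H(p)$, so $E_p$ is a minimizer.

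\textbf{Curvature.} The heart of the argument is to upgrade this variational minimality to interior regularity by showing that $E_p$ is an almost-minimizer of the perimeter. Given any competitor $F$ with $E_p\triangle F\subset\subset B_r(x)\subset\subset\Omega$ and $r$ small, minimality gives $P(E_p)\le P(F)(|E_p|/|F|)^p$; since $\bigl||F|-|E_p|\bigr|\le|E_p\triangle F|$ and $|E_p|$ is bounded below, one estimates $(|E_p|/|F|)^p\le 1+C|E_p\triangle F|$, whence, localizing in $B_r(x)$,
\[
P(E_p;B_r(x))\le P(F;B_r(x))+\Lambda\,|E_p\triangle F|,
\]
with $\Lambda$ depending only on $p$, $|E_p|$ and $P(E_p)$. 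Thus $E_p$ is a $(\Lambda,r_0)$-minimizer of the perimeter, and the regularity theory for such sets (see~\cite{Mag12book}) shows that in the plane $\partial E_p\cap\Omega$ is a $C^{1,\gamma}$ curve with no singular points. On this free portion I would then write the first variation along a vector field $X$ compactly supported in $\Omega$: vanishing of $\frac{\de}{\de t}\bigl[P(E_t)/|E_t|^p\bigr]_{t=0}$ reads $\int_{\partial E_p\cap\Omega}\bigl(H-\tfrac{pP(E_p)}{|E_p|}\bigr)(X\cdot\nu)\,\de\mathcal{H}^1=0$ for all such $X$, forcing the scalar curvature to be the constant $H=\tfrac{pP(E_p)}{|E_p|}=pH(p)|E_p|^{p-1}$, where I used $P(E_p)=H(p)|E_p|^p$. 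Planar curves of constant curvature are arcs of circles, which yields~\eqref{eq:curvature_p_Cheeger}.

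\textbf{Volume bound.} Finally, for $p>\sfrac12$ I would test $H(p)$ against an inball $B_R\subset\Omega$: minimality and the isoperimetric inequality give $2\sqrt\pi\,|E_p|^{\sfrac12-p}\le P(E_p)/|E_p|^p\le P(B_R)/|B_R|^p=2\pi^{1-p}R^{1-2p}=2\sqrt\pi\,(\pi R^2)^{\sfrac12-p}$, that is $|E_p|^{\sfrac12-p}\le(\pi R^2)^{\sfrac12-p}$; since $\sfrac12-p<0$ the map $t\mapsto t^{\sfrac12-p}$ is decreasing, and we conclude $|E_p|\ge\pi R^2$.

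I expect the almost-minimality step to be the main obstacle, since it is what turns the merely variational minimality into the $C^{1,\gamma}$ interior regularity needed to make the curvature computation classical; once this regularity is available, both the Euler--Lagrange equation and the volume bound are routine.
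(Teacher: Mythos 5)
Your proposal is correct and follows essentially the same route as the paper: the paper delegates existence and the curvature identity~\eqref{eq:curvature_p_Cheeger} to~\cite[Thms.~3.2 and 2.2(4)]{PS17}, whose proofs are precisely your Direct Method argument and the first variation made rigorous via $(\Lambda,r_0)$-minimality and the regularity theory of~\cite{Mag12book}, with the observation (also made in the paper) that nothing changes for $p>1$. Your volume bound is likewise the paper's own argument---comparison with balls through the isoperimetric inequality and the monotonicity of $r\mapsto P(B_r)/|B_r|^p=2\pi^{1-p}r^{1-2p}$---merely organized as a direct chain of inequalities rather than as a contradiction for volumes $V<\pi R^2$.
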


\begin{proof}
In the range $\sfrac 12 \le p \le 1$, existence is proved in \cite[Thm.~3.2]{PS17}, while the relation~\eqref{eq:curvature_p_Cheeger} on the curvature in \cite[Thm.~2.2(4)]{PS17}. The same exact proofs work in the range $p>1$, since they only rely on the Direct Method and a first order expansion.

Thus, we are left to show the bound on the volume, and this easily follows by computing the ratio for balls. Indeed, for any ball $B_r$ of radius $r$, one has
\begin{equation}
\label{eq:p-Cheeger_ratio_decreasing}
\frac{P(B_r)}{|B_r|^p} 
= 
2\pi^{1-p}r^{1-2p},
\end{equation}
which is strictly decreasing in $r$, for $p>\sfrac 12$, while for $p=\sfrac 12$ it would be constant. For any volume $V< \pi R^2$, we let $r=r(V)<R$ be the radius of any ball with volume $V$. Then, for any set $F$ of volume $V$, by using the isoperimetric inequality, and~\eqref{eq:p-Cheeger_ratio_decreasing} we have
\[
\frac{P(F)}{V^p} 
\ge 
\frac{P(B_{r})}{V^p} 
> 
\frac{P(B_R)}{|B_R|^p} 
\ge 
H(p),
\]
which implies that such a set cannot be a minimizer.
\end{proof}

\begin{lem}
\label{lem:inf_Fk_decreasing}
Let $\Omega$ be a bounded, open set in $\R^2$, let $R$ be the inradius of $\Om$, and let $\kappa>0$ be fixed. Then $F(\kappa)$ defined in~\eqref{eq:pmc} is strictly decreasing as a function of $\kappa$, and it switches sign at $H(1)$.
\end{lem}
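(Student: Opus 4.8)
The plan is to treat $F$ as the infimum of an explicit family of functions of $\kappa$ and to exploit the volume constraint $|A|\ge \pi R^2$ in two different ways: once for monotonicity and once for the sign change.

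\emph{Monotonicity.} For every fixed admissible competitor $A$ (that is, $A\subset\Omega$ with $|A|\ge\pi R^2$), the map $\kappa\mapsto P(A)-\kappa|A|$ is affine with slope $-|A|\le -\pi R^2<0$. First I would make this decrease uniform in $A$: given $\kappa_1<\kappa_2$ and any admissible $A$, write
\[
P(A)-\kappa_2|A| = \bigl(P(A)-\kappa_1|A|\bigr) - (\kappa_2-\kappa_1)|A| \le \bigl(P(A)-\kappa_1|A|\bigr) - (\kappa_2-\kappa_1)\pi R^2,
\]
using $|A|\ge\pi R^2$ and $\kappa_2-\kappa_1>0$. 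Since this holds for every admissible $A$, passing to the infimum on both sides yields $F(\kappa_2)\le F(\kappa_1)-(\kappa_2-\kappa_1)\pi R^2< F(\kappa_1)$, which gives strict monotonicity with an explicit gap (a symmetric estimate with $|A|\le|\Omega|$ in fact shows $F$ is Lipschitz). This step needs neither existence of minimizers nor any finiteness beyond the trivial bounds $-\kappa|\Omega|\le F(\kappa)\le 2\pi R-\kappa\pi R^2$, obtained by estimating $P(A)-\kappa|A|$ from below and by testing an inball, respectively.

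\emph{Sign change.} For the behaviour at $H(1)$ I would use the algebraic splitting, valid for every admissible $A$,
\[
P(A)-\kappa|A| = \bigl(P(A)-H(1)|A|\bigr) + (H(1)-\kappa)|A|.
\]
By the very definition of $H(1)$ as the $1$-Cheeger constant one has $P(A)\ge H(1)|A|$, so the first bracket is nonnegative. If $\kappa<H(1)$, then $(H(1)-\kappa)|A|\ge (H(1)-\kappa)\pi R^2>0$, whence $F(\kappa)\ge (H(1)-\kappa)\pi R^2>0$; if $\kappa=H(1)$ the same computation gives $F(H(1))\ge 0$. For the reverse inequalities I would test with a $1$-Cheeger set $E_1$: by \cref{prop:properties_p_Cheeger} such a set exists and, since $1>\sfrac12$, satisfies $|E_1|\ge \pi R^2$, so it is admissible in \eqref{eq:pmc}. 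Plugging it in gives $P(E_1)-\kappa|E_1| = (H(1)-\kappa)|E_1|$, which equals $0$ for $\kappa=H(1)$ and is strictly negative for $\kappa>H(1)$ (as $|E_1|>0$). Hence $F(H(1))=0$, while $F(\kappa)>0$ for $\kappa<H(1)$ and $F(\kappa)<0$ for $\kappa>H(1)$; that is, $F$ changes sign exactly at $H(1)$.

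The only genuinely non-elementary ingredient, and hence the main point to get right, is the admissibility of the competitor $E_1$: the sign-change argument hinges on a minimizer of the Cheeger ratio being a legitimate competitor for the \emph{constrained} problem \eqref{eq:pmc}, which is guaranteed precisely by the volume lower bound $|E_1|\ge\pi R^2$ furnished by \cref{prop:properties_p_Cheeger}. Everything else is a direct manipulation of the infimum together with the constraint $|A|\ge\pi R^2$.
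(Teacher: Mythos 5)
Your proof is correct, but it takes a genuinely different route from the paper's. For monotonicity, the paper first invokes the Direct Method to produce minimizers $E_{\kappa_i}$ of \eqref{eq:pmc} and then compares $F(\kappa_1)$ and $F(\kappa_2)$ by testing each with the other's minimizer, getting strictness from $|E_{\kappa_2}|>0$; you instead never use existence of minimizers at all, exploiting the constraint $|A|\ge \pi R^2$ uniformly over competitors to get the quantitative gap $F(\kappa_2)\le F(\kappa_1)-(\kappa_2-\kappa_1)\pi R^2$. This is more elementary and buys more: an explicit modulus of strict decrease (indeed Lipschitz continuity of $F$, which the paper's argument also yields but does not record), at the cost of leaning on the volume constraint rather than on the compactness machinery. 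For the sign change, the paper deduces the sign pattern from strict monotonicity plus the single identity $F(H(1))=0$, whereas you prove the two-sided bounds directly from the splitting $P(A)-\kappa|A|=(P(A)-H(1)|A|)+(H(1)-\kappa)|A|$ and the Cheeger inequality $P(A)\ge H(1)|A|$; in particular your sign argument is independent of the monotonicity step, which the paper's is not. Both proofs hinge on the same non-elementary ingredient, which you correctly isolate: the admissibility of a $1$-Cheeger set $E_1$ as a competitor in \eqref{eq:pmc}, i.e.\ the bound $|E_1|\ge \pi R^2$ from \cref{prop:properties_p_Cheeger} (note $H(1)$ is an infimum over all $|F|>0$, so $P(A)\ge H(1)|A|$ holds for every admissible $A$, but the reverse inequality $F(H(1))\le 0$ needs $E_1$ to satisfy the volume constraint). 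This is exactly the point the paper's proof also rests on, so no gap remains.
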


\begin{proof}
First, for any fixed $\kappa$ the infimum is finite, since it is bounded from below by $-\kappa|\Omega|$. Second, by the Direct Method, it is easy to see that the infimum is attained by some set $E_\kappa$, with positive volume. Let now $\kappa_1>\kappa_2$, and let $E_{\kappa_i}$ be sets achieving the respective minima. Then,
\begin{align*}
F(\kappa_1)
&= 
P(E_{\kappa_1}) - \kappa_1 |E_{\kappa_1}| 
\le 
P(E_{\kappa_2}) - \kappa_1 |E_{\kappa_2}|
\\
&< 
P(E_{\kappa_2}) - \kappa_2 |E_{\kappa_2}| 
= 
F(\kappa_2).
\end{align*}
We are left to show that $F(\kappa)$ has as unique zero $H(1)$. Clearly, there is at most one, since we have proved that $F(\kappa)$ is strictly decreasing. The fact that $F(H(1))=0$ is immediate by the definition of $1$-Cheeger constant and the bound on the volume of minimizers provided by \cref{prop:properties_p_Cheeger}.
\end{proof}


The next theorem recollects results from~\cite{LS22}, and it establishes a duality between the task of finding isoperimetric sets in $\Omega$ (in a given range of volumes) and finding minimizers of the prescribed curvature functional (in a given range of curvatures), under the assumption that $\Omega$ has no necks of any radius, following \cref{def:omega_nonecks}.

\begin{thm}
\label{thm:structure_LS22}
Let $\Omega\subset \R^2$ be a set with no necks of any radius, and let $R$ denote the inradius of $\Omega$. There is a continuous function $\K: V \mapsto \K(V)$ from $[\pi R^2, |\Omega|)$ to $[\sfrac 1R, +\infty)$ with the following properties:
\begin{itemize}
\item[(i)] a set $E_V$ of volume $V$ is isoperimetric if and only if it is a solution to the $\K(V)$-prescribed curvature problem, i.e., if it attains~\eqref{eq:pmc}, or equivalently if it minimizes 
\begin{equation}
\label{eq:def_Fk}
\mathcal{F}_{\K(V)}[E] 
:= 
P(E)-\K(V)|E|,
\end{equation}
among all subsets of $\Omega$ with $|E|\ge \pi R^2$;

\item[(ii)] given an isoperimetric set $E_V$ of volume $V$, the set $\fr E_V \cap \Omega$ has constant curvature, and it is equal to $\K(V)$;

\item[(iii)] given $V_2>V_1$, and two isoperimetric sets $E_{V_i}$ of these volumes, one has $\K(V_2)\ge \K(V_1)$, that is, the map is increasing. Moreover, if the strict inequality $\K(V_2)>\K(V_1)$ holds, one has $E_{V_1} \subsetneq E_{V_2}$;

\item[(iv)] $V < m(\Omega)$ if and only if $\K(V) < H(1)$,  $V > M(\Omega)$ if and only if $\K(V) >H(1)$,  while $V\in[m(\Omega), M(\Omega)]$ if and only if $\mathfrak{K}(V)=H(1)$,  where $m(\Omega)$ and $M(\Omega)$ have been, respectively, defined in~\eqref{eq:mOm} and~\eqref{eq:MOm}.
\end{itemize}
\end{thm}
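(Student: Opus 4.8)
Since this theorem collects the structural results of~\cite{LS22}, the plan is to identify, for each item, the statement there that yields it and to package them into the single continuous function $\K$. The natural starting point is the first-variation and regularity theory for volume-constrained perimeter minimizers: any isoperimetric set $E_V$ of volume $V \ge \pi R^2$ is smooth inside $\Om$ and satisfies the Euler--Lagrange condition that $\fr E_V \cap \Om$ has constant curvature, hence is a union of circular arcs. I would take this common curvature as the \emph{definition} of $\K(V)$, which at once gives item~(ii) and forces $\K$ to be single-valued, once one verifies that two isoperimetric sets of the same volume cannot carry different boundary curvatures --- a fact I expect to read off from the Lagrangian characterization in item~(i).

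For item~(i) the mechanism is Lagrangian duality: stationarity for the constrained problem $I(V)$ is exactly stationarity for the free functional $P(\cdot) - \kappa\,|\cdot|$ at the multiplier $\kappa = \K(V)$. The delicate point, and the reason the geometry of $\Om$ enters, is the upgrade from this \emph{local} stationarity to a genuine \emph{global} equivalence between solutions of the $\kappa$-prescribed curvature problem~\eqref{eq:pmc} and isoperimetric sets of the matching volume; this is the main structural theorem of~\cite{LS22}, valid precisely for sets with no necks of any radius, which I would cite directly.

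Items~(iii) and~(iv) then follow by soft arguments. Monotonicity $\K(V_2)\ge\K(V_1)$ is obtained by cross-testing: one inserts $E_{V_1}$ as a competitor in the problem defining $E_{V_2}$ and vice versa, and adds the two resulting inequalities, exactly as in the proof of \cref{lem:inf_Fk_decreasing}; the strict case together with the nesting $E_{V_1}\subsetneq E_{V_2}$ is~\cite[Cor.~4.3~(ii)]{LS22}. For item~(iv) I would combine this monotonicity with the definitions~\eqref{eq:mOm}--\eqref{eq:MOm} of $m(\Om)$ and $M(\Om)$ and with the fact, recorded in \cref{lem:inf_Fk_decreasing}, that $\kappa = H(1)$ is the unique curvature at which $F(\kappa)$ vanishes: the $1$-Cheeger sets are exactly the isoperimetric sets of volumes in the plateau $[m(\Om), M(\Om)]$, and these are precisely the volumes on which $\K$ takes the value $H(1)$.

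The hard part is the \emph{continuity} of $\K$. Here I would use that a monotone function on an interval is continuous if and only if its image is again an interval; since item~(iii) supplies monotonicity, it remains to show that $\K$ attains every intermediate curvature. This is delivered by the converse direction of item~(i): for each $\kappa \in [\sfrac 1R, \bar\kappa)$, where $\bar\kappa$ is the threshold beyond which $\Om$ itself minimizes $F(\kappa)$, the problem~\eqref{eq:pmc} admits a minimizer $E_\kappa \neq \Om$, which is then isoperimetric of volume $V_\kappa := |E_\kappa| < |\Om|$ and carries boundary curvature $\kappa$ by item~(ii), so that $\kappa = \K(V_\kappa)$ lies in the image. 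The real work is the compactness ensuring that such minimizers exist, stay nontrivial, and do not skip a whole band of volumes; it is exactly here that the no-necks property and the hypothesis $|\fr\Om| = 0$ of \cref{def:omega_nonecks} are indispensable, and I would invoke the continuity statement of~\cite{LS22} rather than reconstruct it.
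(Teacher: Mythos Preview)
Your proposal is correct and matches the paper's own treatment: this theorem is not proved from scratch but assembled by citing the relevant results of~\cite{LS22}, and you identify essentially the same ingredients and logical dependencies (item~(i) as the core input, item~(ii) following from it via first variation, monotonicity by cross-testing the Lagrangian, item~(iv) from monotonicity together with the sign change of $F(\kappa)$ at $H(1)$). Two small corrections: the nesting statement in item~(iii) is attributed in the paper to~\cite[Cor.~3.12]{LS22}, not to~\cite[Cor.~4.3~(ii)]{LS22} --- the latter is reserved for the \emph{strict} monotonicity of $\K$ under additional hypotheses on $\Omega$, which the paper mentions only as a side remark; and the paper simply points to the proof of~\cite[Thm.~2.4]{LS22} for continuity, rather than arguing via ``monotone with interval image'' as you do, though your route is equally valid.
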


For point~(i) we refer to~\cite[Thm.~2.4]{LS22}. The continuity of the map is not explicitly stated but this is shown in the proof of the same theorem. Point~(ii) follows from point~(i) and~\cite[Prop.~3.2~(i)]{LS22}. For point~(iii) we refer to~\cite[Cor.~3.12]{LS22}. Point~(iv) follows from point~(i), point~(iii) and the structure granted by~\cite[Thm.~2.3]{LS22} of minimizers of the prescribed curvature functional $\mathcal{F}_\kappa$ defined in~\eqref{eq:def_Fk}. We remark that under slightly stronger conditions on $\Omega$, the monotonicity of point~(iii) improves to a strict monotonicity, hence the function $\K$ becomes a bijection on its image, refer to~\cite[Cor.~4.3~(ii)]{LS22}.


\begin{lem}
\label{lem:Ep_isoperimetric}
Let $\Omega$ be an open, bounded set in $\R^2$, and let $p\ge \sfrac 12$. Any $p$-Cheeger set $E_p$ is an isoperimetric set in $\Omega$ for its own volume, that is,
\[
P(E_p) 
= 
\inf\{\,P(F)\,:\, F\subset \Omega\,, |F|=|E_p|\,\}.
\]
\end{lem}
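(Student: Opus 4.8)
The plan is to prove this by a standard comparison argument. The statement asserts that a $p$-Cheeger set $E_p$ minimizes perimeter among all competitors in $\Omega$ with the \emph{same} volume $|E_p|$. The natural strategy is proof by contradiction: suppose $E_p$ is \emph{not} isoperimetric for its own volume, so there exists a competitor $F\subset\Omega$ with $|F|=|E_p|$ but $P(F)<P(E_p)$. Since $|F|=|E_p|$, the denominators in the $p$-Cheeger ratio coincide, and hence
\[
\frac{P(F)}{|F|^{p}}
=
\frac{P(F)}{|E_p|^{p}}
<
\frac{P(E_p)}{|E_p|^{p}}
=
H(p),
\]
which contradicts the fact that $H(p)$ is the infimum of the ratio $P(\cdot)/|\cdot|^{p}$ over admissible sets. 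This immediately forces $P(F)\ge P(E_p)$ for every such competitor, which is exactly the claimed equality once one observes that $E_p$ itself is an admissible competitor for the isoperimetric problem at volume $|E_p|$.

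The only point requiring a little care is the admissibility of the competitor $F$ in the $p$-Cheeger problem. The infimum defining $H(p)$ ranges over Borel sets $F\subset\Omega$ with $|F|>0$, and since $|F|=|E_p|\ge\pi R^2>0$ by \cref{prop:properties_p_Cheeger} (or simply because $E_p$ has positive volume as a minimizer), the competitor $F$ does lie in the admissible class. Thus no separate argument is needed to guarantee $|F|>0$. I would note explicitly that the argument uses nothing about the geometry of $\Omega$ beyond boundedness and openness, and in particular requires neither the no-necks hypothesis nor any regularity, which is why this lemma holds in full generality.

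First I would state the contradiction hypothesis and record that $E_p$ is itself admissible for the isoperimetric problem at volume $|E_p|$, so that the infimum $I(|E_p|)$ is at most $P(E_p)$. Then I would display the chain of inequalities above to derive the contradiction with the definition of $H(p)$. Finally I would conclude that $P(E_p)\le P(F)$ for all admissible $F$, whence $P(E_p)=I(|E_p|)$.

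I do not anticipate any genuine obstacle here: this is a one-line comparison once the denominators are matched, and the essential mechanism is simply that fixing the volume reduces the scaling-weighted functional to the bare perimeter. The only thing to be vigilant about is keeping the direction of the strict versus non-strict inequalities straight, and ensuring the competitor is correctly verified to be admissible in both variational problems.
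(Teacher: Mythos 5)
Your proof is correct and is essentially the paper's own argument: the paper proves the same comparison directly (for any $F$ with $|F|=|E_p|$, the definition of $H(p)$ gives $P(E_p)/|E_p|^p \le P(F)/|F|^p$, hence $P(E_p)\le P(F)$), whereas you phrase it as a contradiction, which is logically the same one-line mechanism of matching denominators. Your extra remarks on admissibility and on the generality of $\Omega$ are accurate but not needed beyond noting $|E_p|>0$.
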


\begin{proof}
%
%
%
Let $E_p$ be a $p$-Cheeger set. By definition of $p$-Cheeger constant, for any other set $F\subset \Omega$ one has
\[
H(p)
=
\frac{P(E_p)}{|E_p|^{p}}
\le 
\frac{P(F)}{|F|^{p}}.
\]
In particular, for all competitors $F$ such that $|F|=|E_p|=V$ one has
\[
\frac{P(E_p)}{V^p} 
\le 
\frac{P(F)}{V^p},
\]
that is, $P(E_p)\le P(F)$ for all $F\subset \Omega$ with its same volume. This means that $E_p$ is isoperimetric for its own volume. 
\end{proof}

\begin{lem}
\label{lem:curvatures_Ep}
Let $\Omega\subset \R^2$ be a set with no necks of any radius, and denote by $R$ the inradius of $\Omega$. Given any measurable subset $E\subset \Omega$, we denote by $\kappa_E$ the curvature of $\fr E \cap \Omega$. The following hold:
\begin{itemize}
\item[(i)] if $p\in (\sfrac 12, 1)$, for any $p$-Cheeger set $E_p$ one has
\[
\max\{\,p H(1), R^{-1}\,\} 
\le 
\kappa_{E_{p}} 
< 
H(1);
\]
\item[(ii)] if $p=1$, for any $1$-Cheeger set $E_1$ one has $\kappa_{E_{1}} = H(1)$;
\item[(iii)] if $p>1$, for any $p$-Cheeger set $E_p$ one has $\kappa_{E_{p}} \ge pH(1)$.
\end{itemize}
\end{lem}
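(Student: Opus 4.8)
The plan is to combine the curvature formula from \cref{prop:properties_p_Cheeger} with the fact that $p$-Cheeger sets are isoperimetric (\cref{lem:Ep_isoperimetric}) and with the structure theorem \cref{thm:structure_LS22}. The key starting observation is purely algebraic: rewriting \eqref{eq:curvature_p_Cheeger} as
\[
\kappa_{E_p} = pH(p)|E_p|^{p-1} = p\,\frac{P(E_p)}{|E_p|},
\]
reduces everything to estimating the ratio $P(E_p)/|E_p|$. Since $E_p\subset\Omega$ has positive volume it is a competitor in the definition of $H(1)$, whence $P(E_p)/|E_p|\ge H(1)$; this yields at once the universal lower bound $\kappa_{E_p}\ge pH(1)$, which is exactly case (iii) and the $pH(1)$ part of case (i). The other lower bound $\kappa_{E_p}\ge R^{-1}$ in (i) is immediate: by \cref{lem:Ep_isoperimetric} the set $E_p$ is isoperimetric of volume $V:=|E_p|\ge\pi R^2$, so \cref{thm:structure_LS22}(ii) gives $\kappa_{E_p}=\K(V)$, and $\K$ takes values in $[R^{-1},+\infty)$. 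Case (ii) is just the formula evaluated at $p=1$, which gives $\kappa_{E_1}=H(1)|E_1|^{0}=H(1)$.

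The substantial point is the strict upper bound $\kappa_{E_p}<H(1)$ in case (i). By \cref{thm:structure_LS22}(iv) this is equivalent to $V<m(\Omega)$, so I would prove the latter. First I would establish the non-strict bound $V\le m(\Omega)$ by a two-sided competitor argument: testing $H(p)$ against a minimal $1$-Cheeger set $E_1^{\min}$ (for which $|E_1^{\min}|=m(\Omega)$ and $P(E_1^{\min})=H(1)m(\Omega)$) gives $P(E_p)=H(p)|E_p|^{p}\le H(1)\,m(\Omega)^{1-p}|E_p|^{p}$, while testing $H(1)$ against $E_p$ gives $P(E_p)\ge H(1)|E_p|$. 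Chaining these and using $1-p>0$ yields $|E_p|^{1-p}\le m(\Omega)^{1-p}$, that is, $V\le m(\Omega)$.

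To upgrade this to a strict inequality I would rule out $V=m(\Omega)$ by contradiction. If $V=m(\Omega)$, then $E_p$ is isoperimetric of volume $m(\Omega)$; since $I(m(\Omega))=H(1)m(\Omega)$, this forces $P(E_p)=H(1)m(\Omega)$, so $E_p$ is a $1$-Cheeger set. On the one hand \cref{thm:structure_LS22}(ii) and (iv) then give $\kappa_{E_p}=\K(m(\Omega))=H(1)$; on the other hand the identity above gives $\kappa_{E_p}=p\,P(E_p)/m(\Omega)=pH(1)$. Since $p<1$ and $H(1)>0$, these are incompatible, so $V<m(\Omega)$, and \cref{thm:structure_LS22}(iv) concludes $\kappa_{E_p}<H(1)$. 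The \emph{main obstacle} is precisely this borderline case $V=m(\Omega)$: the soft competitor comparison only delivers the non-strict bound, and one genuinely needs the exact curvature identity \eqref{eq:curvature_p_Cheeger}, which carries the factor $p$, to separate $pH(1)$ from $H(1)$ and thereby exclude the equality.
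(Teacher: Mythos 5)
Your lower bounds and case (ii) coincide with the paper's own proof: both rest on rewriting \eqref{eq:curvature_p_Cheeger} as $\kappa_{E_p}=p\,P(E_p)/|E_p|\ge pH(1)$, which settles (ii), (iii) and the $pH(1)$ part of (i), and on $\kappa_{E_p}=\K(|E_p|)\ge R^{-1}$ via \cref{lem:Ep_isoperimetric}, \cref{prop:properties_p_Cheeger} and \cref{thm:structure_LS22}. For the strict upper bound $\kappa_{E_p}<H(1)$, however, you take a genuinely different route. The paper stays on the curvature side: from \cref{thm:structure_LS22}~(i) it gets $F(\kappa_{E_p})=P(E_p)-\kappa_{E_p}|E_p|$, and since $p<1$ gives $\kappa_{E_p}<\kappa_{E_p}/p$ while minimality gives $\tfrac{\kappa_{E_p}}{p}|E_p|=H(p)|E_p|^p=P(E_p)$, it concludes $F(\kappa_{E_p})>0$; then \cref{lem:inf_Fk_decreasing} (strict monotonicity of $F$ with unique zero at $H(1)$) finishes in one stroke. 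You instead work on the volume side: a two-sided competitor comparison against a minimal $1$-Cheeger set gives $|E_p|\le m(\Omega)$, the borderline case $|E_p|=m(\Omega)$ is excluded because it would force the incompatible curvature values $\K(m(\Omega))=H(1)$ and $pH(1)$, and \cref{thm:structure_LS22}~(iv) converts $|E_p|<m(\Omega)$ into $\kappa_{E_p}<H(1)$. Both arguments are correct, and yours in fact reverses the paper's logical order: the paper later deduces the volume bound $\V(p)\subseteq(\pi R^2,m(\Omega))$ (point (ii) of \cref{prop:p_to_V}) \emph{from} this curvature lemma via \cref{thm:structure_LS22}~(iv), whereas you prove the volume bound first and translate back, which is legitimate since (iv) is an equivalence. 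The paper's route buys economy --- it needs neither the existence of minimal Cheeger sets nor part (iv) of the structure theorem, only \cref{lem:inf_Fk_decreasing}; your route buys the explicit estimate $|E_p|<m(\Omega)$ as a by-product, and your borderline contradiction cleanly isolates where the factor $p$ in \eqref{eq:curvature_p_Cheeger} is indispensable, exactly as you observe.
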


\begin{proof}
Let $p> \sfrac 12$ be fixed and $E_p$ a $p$-Cheeger set of $\Omega$. Point~(ii) immediately follows from~\eqref{eq:curvature_p_Cheeger}, so let us focus on $p\neq 1$.

First, by the equality on the curvature~\eqref{eq:curvature_p_Cheeger} we have, for any $p$,
\begin{equation}
\label{eq:pterodattilo}
\kappa_{E_p} 
= 
pH(p) |E_p|^{p-1} 
= 
p\frac{P(E_p)}{|E_p|^p} |E_p|^{p-1} 
= 
p\frac{P(E_p)}{|E_p|} 
\ge 
pH(1)\,.
\end{equation}
Hence, point~(iii) immediately follows from the above inequality.

We are left with proving point~(i). 
By \cref{lem:Ep_isoperimetric}, $E_p$ is an isoperimetric set for its own volume, which, by  \cref{prop:properties_p_Cheeger}, is at least $\pi R^2$. Therefore by \cref{thm:structure_LS22}, we have that $E_p$ minimizes $P(F) - \kappa_{E_p}|F|$ among all subsets of $\Omega$ with $|F|\ge \pi R^2$, that is,
\begin{equation}
\label{eq:apatosauro}
F(\kappa_{E_p}) = P(E_p) - \kappa_{E_p}|E_p|
\end{equation}
holds. Moreover, \cref{thm:structure_LS22} grants the second lower bound on the curvature $\kappa_{E_p} = \K(|E_p|)\ge R^{-1}$. This, paired with~\eqref{eq:pterodattilo}, yields
\[
\max\{\,p H(1), R^{-1}\,\} \le \kappa_{E_{p}}\,.
\]
Finally, using~\eqref{eq:apatosauro}, that $p<1$, and the information on the curvature~\eqref{eq:curvature_p_Cheeger}, we have the inequality
\begin{align*}
F(\kappa_{E_p})
&
= 
P(E_p) - \kappa_{E_p}|E_p|
\\
&
>
P(E_p) - \frac{\kappa_{E_p}}{p}|E_p| 
=
P(E_p) - H(p)|E_p|^p
= 
0,
\end{align*}
where the last equality follows from the minimality of $E_p$ with respect to $H(p)$. This paired with \cref{lem:inf_Fk_decreasing} implies that $\kappa_{E_p}< H(1)$.
\end{proof}

\begin{rem}
\label{rem:remark_hyp_radii}
We remark that for point~(i) to hold, it would be enough for $\Omega$ to have no necks of radius $r$ for all $r\in (H(1)^{-1}, R]$, while for point~(iii) for all $r\in (0, H(1)^{-1})$, by using a finer version of \cref{thm:structure_LS22}, refer to~\cite[Thm.~2.3]{LS22}.
\end{rem}

The following result is crucial in proving the continuity of the map $\mathfrak{V}$, in terms of $\Gamma$-convergence, for whose definition one can refer to~\cite{Bra02book}.

\begin{thm}
\label{thm:G_convergence}
Let $\Omega\subset \R^2$ be an open, bounded set and denote by $R$ its inradius. For any fixed $p\ge \sfrac12$, and any measurable set $E\subset \Omega$ with volume $|E|\ge \pi R^2$, define the $p$-Rayleigh quotient
\[
R_{p}[E] = \frac{P(E)}{|E|^{p}}.
\]
Given $p\ge \sfrac12$ and $\{p_j\}_j$ a sequence with $p_j \ge \sfrac12$ converging to $p$, the functionals $R_{p_j}$ $\Gamma$-converge to $R_p$ in the $L^1_{\mathrm{loc}}$ topology.
\end{thm}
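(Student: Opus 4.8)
The plan is to check directly the two defining conditions of $\Gamma$-convergence: the $\Gamma$-$\liminf$ inequality, namely that $\liminf_j R_{p_j}[E_j] \ge R_p[E]$ whenever $E_j \to E$ in $L^1_{\mathrm{loc}}$, and the existence of a recovery sequence realizing the $\Gamma$-$\limsup$ inequality. Two elementary facts will carry most of the argument. First, since every competitor is contained in the bounded set $\Omega$, the convergence $\chi_{E_j} \to \chi_E$ in $L^1_{\mathrm{loc}}$ coincides with $L^1$ convergence, so the volume is \emph{continuous}, $|E_j| \to |E|$. Second, the perimeter is $L^1_{\mathrm{loc}}$-lower semicontinuous, i.e.\ $\liminf_j P(E_j) \ge P(E)$, see~\cite{Mag12book}.

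For the $\Gamma$-$\liminf$ inequality, fix $E_j \to E$ with all volumes at least $\pi R^2$. By volume continuity $|E_j| \to |E| \ge \pi R^2 > 0$, and since $p_j \to p$ the strict positivity of the limit volume gives $|E_j|^{p_j} = \exp(p_j \log|E_j|) \to \exp(p\log|E|) = |E|^{p}$. Writing $R_{p_j}[E_j] = P(E_j)\cdot |E_j|^{-p_j}$ as a product of a nonnegative sequence with a sequence converging to the strictly positive number $|E|^{-p}$, the elementary rule $\liminf_j (a_j c_j) = (\lim_j c_j)(\liminf_j a_j)$, valid for $a_j \ge 0$ and $c_j \to c > 0$, yields
\[
\liminf_{j} R_{p_j}[E_j] = |E|^{-p}\,\liminf_{j} P(E_j) \ge \frac{P(E)}{|E|^{p}} = R_p[E],
\]
where the final inequality is the lower semicontinuity of the perimeter.

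For the $\Gamma$-$\limsup$ inequality it suffices to exhibit one recovery sequence, and the \emph{constant} sequence $E_j \equiv E$ works: since $|E| \ge \pi R^2 > 0$ one again has $|E|^{p_j} \to |E|^{p}$, whence $R_{p_j}[E] \to R_p[E]$ (this holds trivially, with both sides $+\infty$, when $P(E) = +\infty$). Thus $\limsup_j R_{p_j}[E] \le R_p[E]$, which completes the verification. The only delicate point, and precisely the reason the functionals are defined solely on sets with $|E| \ge \pi R^2$, is keeping the denominators bounded away from zero: the map $V \mapsto V^{p}$ and the quotient $P/V^{p}$ are jointly well-behaved in $(V,p)$ exactly because the volume stays uniformly positive, so no degeneration of the denominator as $p \to \sfrac12$ or as the volumes shrink can occur.
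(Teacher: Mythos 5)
Your proof is correct and follows essentially the same route as the paper: lower semicontinuity of the perimeter plus convergence of the denominators $|E_j|^{p_j}\to|E|^p$ for the $\Gamma$-$\liminf$, and the constant recovery sequence for the $\Gamma$-$\limsup$. The only cosmetic difference is that you establish $|E_j|^{p_j}\to|E|^p$ via continuity of $(V,q)\mapsto\exp(q\log V)$ on volumes bounded away from zero, whereas the paper splits the difference with a triangle inequality and the bound $|E_j|\le|\Omega|$ --- both are equally valid one-line arguments.
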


\begin{proof}
\emph{The $\Gamma$-$\liminf$ inequality.} Let $E$ be fixed, with $|E|\ge \pi R^2$, and let $E_j$ be \emph{any} sequence of sets converging to $E$ in $L^1_{\text{loc}}$, with $|E_j|\ge \pi R^2$. We need to show that
\[
R_p[E] \le \liminf_j R_{p_j}[E_j]\,.
\]
This is easily verified: the perimeter is lower semicontinuous with respect to such topology, thus $P(E)\le \liminf_j P(E_j)$ and the $L^1_{\mathrm{loc}}$ convergence implies,  together with $|E_j|\in [\pi R^2, |\Omega|]$, that
\begin{align*}
\left| |E_j|^{p_j}-|E|^p\right|\leq& \left| |E_j|^{p_j}-|E_j|^p\right|+\left| |E_j|^{p}-|E|^p\right|\\
\leq& |\Omega|^p \left| |E_j|^{p_j-p}-1\right|+\left| |E_j|^{p}-|E|^p\right|\rightarrow 0
\end{align*}
and the claim follows. 

\emph{The $\Gamma$-$\limsup$ inequality.} Let $E$ be fixed, with $|E|\ge \pi R^2$. We need to find a sequence of sets $E_j$ converging to $E$ in $L^1_{\mathrm{loc}}$ such that 
\[
R_p[E] \ge \limsup_j R_{p_j}[E_j]\,.
\]
Whether $P(E)=+\infty$ or not, the constant sequence $E_j=E$ clearly does the trick and satisfies the $\Gamma$-$\limsup$ inequality.
\end{proof}

\begin{rem}\label{rem_convergence_minimizers}
One of the powerful consequences of $\Gamma$-convergence is that if $E_{p_j}$ is a sequence of minimizers of the functionals $R_{p_j}$ $\Gamma$-converging to $R_p$, then any limit point (w.r.t.~the considered topology) of the sequence is a minimizer of $R_p$, refer to~\cite[Sect.~1.5]{Bra02book}. Therefore, since for any $q\ge \sfrac 12$ any minimizer of $R_{q}$ is a $q$-Cheeger set, the above theorem implies the following. Given any sequence $p_j$ with $p_j\ge \sfrac 12$ converging to $p$, consider a sequence $E_{p_j}$ of minimizers of $R_{p_j}$. By $\Gamma$-convergence, any of its limit points in $L^1_{\text{loc}}$ is a minimizer of $R_p$, that is, it is a $p$-Cheeger set. In particular, since $E_{p_j}$ is a $p_j$-Cheeger set, one has
\[
P(E_j) \le 2\sup_j h_{p_j}(\Omega) |\Omega|^{p_j},\, \qquad \forall j.
\]
Since $p_j$ is converging, it is bounded from above by some $\hat p$, and also from below by $\sfrac 12$. Thus, the RHS of the above inequality is bounded independently from $j$: choosing a ball $B_r\subset \Omega$ as a competitor to $h_{p_j}(\Omega)$, one has
\[
P(E_j) \le 4 \max\{1, r^{1-2\hat p}\} \max\{|\Omega|^{\frac 12}, |\Omega|^{\hat p}\}.
\]
By standard compactness results on the perimeter, $E_{p_j}$ converges in $L^1_{\text{loc}}$ to a set of finite perimeter $E\subset \Omega$. By $\Gamma$-convergence, $E$ is a $p$-Cheeger set.
\end{rem}


\begin{lem}
\label{prop:p_to_V}
Let $\Omega\subset \R^2$ be a set with no necks of any radius, and let $R$ be its inradius. For any fixed $p\geq \sfrac 12$ we have the following
\begin{itemize}
\item[(i)] $\V(\sfrac12)=(0,\pi R^2]$;
\item[(ii)] if $p \in (\sfrac 12, 1)$,  then $\V(p)\subseteq (\pi R^2, m(\Omega))$;
\item[(iii)]  $\V(1)=[m(\Omega),M(\Omega)]$;
\item[(iv)] if $p>1$, then $\V(p)\subseteq (M(\Omega), |\Omega|]$;
\end{itemize}
where $m(\Omega)$ and $M(\Omega)$ have been, respectively, defined in~\eqref{eq:mOm} and~\eqref{eq:MOm}.
\end{lem}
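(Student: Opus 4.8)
The plan is to treat the four ranges of $p$ separately, in each case combining the curvature information for $p$-Cheeger sets (\cref{prop:properties_p_Cheeger} and \cref{lem:curvatures_Ep}), the fact that every $p$-Cheeger set is isoperimetric for its own volume (\cref{lem:Ep_isoperimetric}), and the dictionary between isoperimetric sets, prescribed-curvature sets and the monotone map $\K$ furnished by \cref{thm:structure_LS22}. For the scaling-invariant endpoint (i) I would argue directly from the isoperimetric inequality: for $p=\sfrac12$ one has $P(F)/|F|^{1/2}\ge 2\sqrt\pi$ with equality exactly for balls, so the $\sfrac12$-Cheeger sets are precisely the balls contained in $\Om$. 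Since the inradius is $R$, the admissible radii are exactly those in $(0,R]$, and the corresponding volumes sweep out $(0,\pi R^2]$.

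For the upper endpoint in (ii) and the lower endpoint in (iv) I would read off the bounds from \cref{lem:curvatures_Ep} through \cref{thm:structure_LS22}. Writing $V=|E_p|\ge \pi R^2$ (legitimate by \cref{prop:properties_p_Cheeger}), point~(ii) of the structure theorem gives $\kappa_{E_p}=\K(V)$; then the bound $\kappa_{E_p}<H(1)$ valid for $p\in(\sfrac12,1)$ forces $V<m(\Omega)$, while the bound $\kappa_{E_p}\ge pH(1)>H(1)$ valid for $p>1$ forces $V>M(\Omega)$, in both cases by point~(iv) of \cref{thm:structure_LS22}. The genuinely delicate point is the \emph{strict} lower bound $V>\pi R^2$ in (ii), for which I would argue by contradiction: if $V=\pi R^2$, then $E_p$ is isoperimetric of volume $\pi R^2$, hence an inball, so $P(E_p)=2\pi R$ and $\K(\pi R^2)=R^{-1}$; but the curvature identity~\eqref{eq:pterodattilo} reads $\kappa_{E_p}=pP(E_p)/|E_p|=2p/R$, and equating this with $\K(\pi R^2)=R^{-1}$ yields $p=\sfrac12$, a contradiction. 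This is precisely where $p>\sfrac12$ enters.

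For (iii), the inclusion $\V(1)\subseteq[m(\Omega),M(\Omega)]$ is immediate from the very definitions~\eqref{eq:mOm}--\eqref{eq:MOm} of $m(\Omega)$ and $M(\Omega)$ as the infimum and supremum of volumes of $1$-Cheeger sets. The reverse inclusion is the crux, and here I would exploit that $F(H(1))=0$. Given $V\in[m(\Omega),M(\Omega)]$, take any isoperimetric set $E_V$ of volume $V$, which exists by the Direct Method. By \cref{thm:structure_LS22}(iv) we have $\K(V)=H(1)$, so $E_V$ minimizes $\mathcal F_{H(1)}$; that is, $P(E_V)-H(1)|E_V|=F(H(1))=0$ by \cref{lem:inf_Fk_decreasing}, whence $P(E_V)/|E_V|=H(1)$ and $E_V$ is a $1$-Cheeger set, giving $V\in\V(1)$. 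When $V=|\Omega|$ one simply takes $E_V=\Omega$.

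The main obstacle I anticipate lies in these strict endpoint arguments rather than in the ``bulk'' of each range: ruling out that a $p$-Cheeger set degenerates to the inball in (ii), and, in (iv), handling the borderline case $V=|E_p|=|\Omega|$, where $\K$ is no longer available (its domain being $[\pi R^2,|\Omega|)$) and one must check separately that the whole domain contributes exactly the right endpoint $|\Omega|$ of the interval $(M(\Omega),|\Omega|]$.
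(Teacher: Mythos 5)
Your proposal is correct and follows the same route as the paper: assertion (i) via scale invariance and the equality cases of the isoperimetric inequality, and (ii)--(iv) by combining \cref{lem:Ep_isoperimetric} with the curvature bounds of \cref{lem:curvatures_Ep} and the volume--curvature dictionary of \cref{thm:structure_LS22}. The two endpoint arguments you single out---the inball contradiction giving the strict bound $|E_p|>\pi R^2$, where $\kappa_{E_p}=2p/R$ clashes with the inball curvature $R^{-1}$ for $p>\sfrac12$, and the reverse inclusion $[m(\Omega),M(\Omega)]\subseteq\V(1)$ obtained from $F(H(1))=0$ via \cref{lem:inf_Fk_decreasing}---are precisely the details the paper compresses into its two-line citation, so your write-up is a correct (indeed more explicit) version of the same proof.
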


\begin{proof}

Assertion (i) follows by recalling that $\frac{P(E)}{\sqrt{|E|}}$ is scale invariant and is equivalent to the standard Euclidean isoperimetric problem, for which each admissible ball $B\subset \Omega$ is a solution.   

For each $p$ any $p$-Cheeger set is isoperimetric for its own volume thanks  to \cref{lem:Ep_isoperimetric}. Thus assertions (ii)--(iv) are a consequence of \cref{lem:curvatures_Ep} paired with assertions (i) and (iv) of \cref{thm:structure_LS22}.
\end{proof}

\begin{prop}\label{prop:unival}
Let $\Omega\subset \R^2$ be a set with no necks of any radius.  Then the restriction of $\V$ to $(\sfrac12,1)$ is univalued.
\end{prop}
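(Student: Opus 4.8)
The plan is to show that for each fixed $p\in(\sfrac12,1)$ the volume $V=|E_p|$ of a $p$-Cheeger set is forced to satisfy a single scalar equation whose two sides have opposite monotonicity; such an equation admits at most one solution, which pins down $V$ uniquely and hence makes $\mathfrak{V}(p)$ a singleton.

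First I would record the two identities satisfied by the free-boundary curvature $\kappa_{E_p}$ of an arbitrary $p$-Cheeger set $E_p$. On one hand, \eqref{eq:curvature_p_Cheeger} gives $\kappa_{E_p}=pH(p)|E_p|^{p-1}$. On the other hand, by \cref{lem:Ep_isoperimetric} the set $E_p$ is isoperimetric for its own volume $V=|E_p|$, and by \cref{prop:p_to_V}~(ii) this volume lies in $(\pi R^2,m(\Omega))\subseteq[\pi R^2,|\Omega|)$, so it belongs to the domain of $\mathfrak{K}$; then \cref{thm:structure_LS22}~(ii) identifies $\kappa_{E_p}=\mathfrak{K}(V)$. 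Combining the two, the volume $V$ of \emph{every} $p$-Cheeger set solves
\[
\mathfrak{K}(V)=pH(p)\,V^{p-1}.
\]

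Next I would exploit the monotonicity mismatch. The quantity $H(p)$ is the $p$-Cheeger constant, a number depending only on $p$ and $\Omega$ and \emph{not} on the chosen minimizer, so $g(t):=pH(p)\,t^{p-1}$ is one fixed function, and it is strictly decreasing on $(0,+\infty)$ because $p-1<0$. By \cref{thm:structure_LS22}~(iii) the map $\mathfrak{K}$ is nondecreasing. Therefore $t\mapsto\mathfrak{K}(t)-g(t)$ is strictly increasing on $[\pi R^2,|\Omega|)$, hence injective, and the displayed equation has at most one solution there. Consequently any two $p$-Cheeger sets must share the same volume, that is, $\mathfrak{V}(p)$ is a single point.

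The argument is essentially complete, so there is no serious obstacle; the only points requiring care are bookkeeping ones. The crucial subtlety is to make explicit that $H(p)$ is a constant common to all $p$-Cheeger sets, so that $g$ is a single fixed function and not one depending on the particular set chosen, which is exactly what allows both volumes to be read off the same equation. One should also confirm $V<|\Omega|$ (so that $\mathfrak{K}(V)$ is defined), which is guaranteed by $V<m(\Omega)$ from \cref{prop:p_to_V}~(ii). Finally, if $\Omega$ is a ball the statement is trivial, since the only $p$-Cheeger set is $\Omega$ itself and $\mathfrak{V}(p)$ is automatically a singleton.
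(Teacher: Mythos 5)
Your proof is correct and takes essentially the same route as the paper: both combine the curvature identity \eqref{eq:curvature_p_Cheeger} (strictly decreasing in the volume, since $p<1$) with the identification $\kappa_{E_p}=\mathfrak{K}(|E_p|)$ via \cref{lem:Ep_isoperimetric} and \cref{thm:structure_LS22}, whose monotonicity rules out two distinct volumes. The paper merely phrases this as a direct contradiction between two $p$-Cheeger sets of volumes $V_2>V_1$, rather than as uniqueness of the solution of the scalar equation $\mathfrak{K}(V)=pH(p)V^{p-1}$, but the content is identical.
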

\begin{proof}
Let $p\in (\sfrac12,1)$ be fixed and let $E^1_p$ and $E^2_p$ be two $p$-Cheeger sets, and denote by $V_i$ their volumes. Assume by contradiction, and up to relabeling, that $V_2>V_1$. By~\eqref{eq:curvature_p_Cheeger} we know that the product $\kappa_{E^i_p} V_i^{1-p}$ is constant. More precisely, we have
\[
\kappa_{E^i_p} V_i^{1-p} = pH(p)\,.
\]
In particular, since $p< 1$ and since we assumed $V_2>V_1$, we infer the strict inequality on the curvatures $\kappa_{E^1_p} > \kappa_{E^2_p}$. By \cref{lem:Ep_isoperimetric} the sets $E^i_p$ are isoperimetric for their own volumes, and by \cref{prop:properties_p_Cheeger} this is at least $\pi R^2$, where $R$ is the inradius of $\Omega$. Thus, we can apply \cref{thm:structure_LS22}~(ii) which immediately gives a contradiction, since it implies the opposite inequality $\kappa_{E^2_p} > \kappa_{E^1_p}$. 
\end{proof}

\begin{lem}
\label{lem:p->V_injective}
Let $\Omega\subset \R^2$ be a set with no necks of any radius. The map $\mathfrak{V}$ is injective.
\end{lem}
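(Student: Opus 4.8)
The plan is to argue by contradiction, assuming that some volume $V$ belongs to $\mathfrak{V}(p_1)\cap\mathfrak{V}(p_2)$ with $p_1\neq p_2$, and to derive $p_1=p_2$. First I would separate the values of $p$ into the four regimes $\{\sfrac12\}$, $(\sfrac12,1)$, $\{1\}$ and $(1,+\infty)$. By \cref{prop:p_to_V} their images under $\mathfrak{V}$ are contained, respectively, in the sets $(0,\pi R^2]$, $(\pi R^2,m(\Omega))$, $[m(\Omega),M(\Omega)]$ and $(M(\Omega),|\Omega|]$, which are pairwise disjoint (that $\Omega$ is not a ball ensures $\pi R^2<m(\Omega)$, so the first two do not overlap). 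Hence a shared volume forces $p_1$ and $p_2$ into the same regime, and since $\sfrac12$ and $1$ are isolated values it suffices to prove injectivity \emph{within} each of the two open regimes $(\sfrac12,1)$ and $(1,+\infty)$.

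The core of the argument is the observation that on $[\pi R^2,|\Omega|)$ the exponent is a function of the volume alone. Indeed, let $E_p$ be a $p$-Cheeger set with $|E_p|=V\in[\pi R^2,|\Omega|)$. By \cref{lem:Ep_isoperimetric} it is isoperimetric for its own volume, whence $P(E_p)=I(V)$; by the curvature relation~\eqref{eq:curvature_p_Cheeger} one has $\kappa_{E_p}=pH(p)|E_p|^{p-1}=p\,P(E_p)/|E_p|=p\,I(V)/V$; and by \cref{thm:structure_LS22}~(ii) this curvature coincides with $\mathfrak{K}(V)$. Eliminating $\kappa_{E_p}$ yields
\[
p=\frac{\mathfrak{K}(V)\,V}{I(V)},
\]
so that $V$ determines $p$ uniquely as long as $V<|\Omega|$. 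In particular, whenever $V\in\mathfrak{V}(p_1)\cap\mathfrak{V}(p_2)$ with $V<|\Omega|$ we immediately get $p_1=p_2$. This disposes of the regime $(\sfrac12,1)$ completely---there $V<m(\Omega)<|\Omega|$, and \cref{prop:unival} already tells us each $\mathfrak{V}(p)$ is a singleton, so the identity turns univaluedness into strict monotonicity---and it handles every collision in $(1,+\infty)$ occurring at a volume strictly below $|\Omega|$.

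The remaining, and genuinely delicate, case is the supercritical endpoint $V=|\Omega|$, i.e.\ the scenario in which $\Omega$ itself is a $p$-Cheeger set for two different exponents. Here the identity above breaks down: since $\partial E_p\cap\Omega=\partial\Omega\cap\Omega=\emptyset$ there is no free boundary to carry a curvature, and $\mathfrak{K}$ is not even defined at $|\Omega|$. This is precisely the juncture at which the supercritical regime resists the analysis that works for $p\le1$. To close this case I would pass to the one-dimensional profile $V\mapsto I(V)/V^p$, whose minimiser over $(0,|\Omega|]$ is the volume of a $p$-Cheeger set, and study its first-order behaviour as $V\to|\Omega|^-$, where the slope is governed by $\mathfrak{K}(V)\to\bar\kappa$. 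The aim would be to show that $\Omega$ can be optimal for at most one exponent, or that optimality of $\Omega$ is always accompanied by a minimiser of strictly smaller volume, so that two distinct exponents cannot produce the same image. I expect this endpoint analysis, rather than the curvature identity, to be the main obstacle.
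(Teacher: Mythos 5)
Your core argument is the paper's argument. The paper's proof takes $V\in\V(p_1)\cap\V(p_2)$ with witnesses $E_1,E_2$, observes via \cref{lem:Ep_isoperimetric} that $P(E_1)=P(E_2)=I(V)$, via \cref{thm:structure_LS22}~(ii) that $\kappa_{E_1}=\kappa_{E_2}=\K(V)$, and then uses~\eqref{eq:curvature_p_Cheeger} in the form $\kappa_{E_i}=p_i P(E_i)/V$ to conclude $p_1=p_2$; this is exactly your identity $p=\K(V)V/I(V)$. Your preliminary regime split via \cref{prop:p_to_V} is harmless but not needed: the identity works uniformly on $[\pi R^2,|\Omega|)$ (it even returns $p=\sfrac12$ at $V=\pi R^2$, since $\K(\pi R^2)=\sfrac1R$ and $I(\pi R^2)=2\pi R$), and volumes below $\pi R^2$ occur only for $p=\sfrac12$ by \cref{prop:properties_p_Cheeger}.

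Concerning the endpoint $V=|\Omega|$: you are right that the curvature identity breaks there ($\fr E\cap\Omega=\emptyset$, and $\K$ is not defined at $|\Omega|$), and you should be aware that the paper's own proof is silent on this case as well---it invokes \cref{thm:structure_LS22}~(ii) and~\eqref{eq:curvature_p_Cheeger} without checking that the free boundary is nonempty. However, the repair you propose (``$\Omega$ can be optimal for at most one exponent'') is provably impossible: if $\Omega$ attains $H(p_1)$, then for every $p\ge p_1$ and every $F\subseteq\Omega$ one has
\[
\frac{P(F)}{|F|^{p}}\;\ge\; H(p_1)\,|F|^{p_1-p}\;\ge\; H(p_1)\,|\Omega|^{p_1-p}\;=\;\frac{P(\Omega)}{|\Omega|^{p}},
\]
so $\Omega$ attains $H(p)$ for \emph{all} $p\ge p_1$, i.e., $|\Omega|\in\V(p)$ for every $p\ge p_1$. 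This scenario really occurs within the class of \cref{def:omega_nonecks}: a stadium (convex, hence with no necks of any radius, and not a ball) is calibrable, since the maximal curvature $\sfrac1\rho$ of its boundary is below its Cheeger constant, so its unique $1$-Cheeger set is the stadium itself, $m(\Omega)=M(\Omega)=|\Omega|$, and then $\V(p)=\{|\Omega|\}$ for every $p\ge 1$ (the displayed chain is strict for $|F|<|\Omega|$ when $p>1$). Hence injectivity in the sense proved by the paper ($\V(p_1)\cap\V(p_2)\neq\emptyset\Rightarrow p_1=p_2$) genuinely fails at the endpoint for such $\Omega$, and your second alternative (a smaller-volume minimizer accompanying $\Omega$) cannot rescue it either, since $|\Omega|$ itself is the shared value. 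The correct reading is that the lemma holds for shared volumes $V<|\Omega|$---which is all your argument, and the paper's, actually establishes---so your flagged ``main obstacle'' is not a defect of your write-up relative to the paper, but a defect of the statement itself in the supercritical regime, consistent with the paper's own caution in its subsection on $p>1$.
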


\begin{proof}
Let $p_1$ and $p_2$ be in $[\sfrac 12, +\infty)$, and assume that $\V(p_1)\cap  \V(p_2)\neq \emptyset$.  Let then $V \in \V(p_1)\cap  \V(p_2)$ and let $E_1$ and $E_2$ be, resp., a $p_1$-Cheeger set and a $p_2$-Cheeger set with $|E_1|=|E_2|=V$.  By \cref{lem:Ep_isoperimetric} they are isoperimetric for their own volumes, and since they have the same volume they necessarily have the same perimeters, that is, $P(E_1)=P(E_2)$. Moreover, by \cref{thm:structure_LS22}~(ii) the sets $\fr E_1 \cap \Omega$ and $\fr E_2 \cap \Omega$ also have the same curvature, that is, $\kappa_{E_1} = \kappa_{E_2}$.

Thus using~\eqref{eq:curvature_p_Cheeger}, one has the equality
\[
p_1H(p_1) V^{p_1-1} = p_2H(p_2) V^{p_2-1},
\]
and explicitly writing $H(p_i)$ as the $p_i$-Rayleigh ratio of $E_i$, one obtains
\[
p_1 \frac{P(E_1)}{V} = p_2 \frac{P(E_2)}{V}.
\]
Since $P(E_1) = P(E_2)$, it follows that $p_1=p_2$, hence the claim.
\end{proof}

\section{Proof of \cref{thm:main}}\label{sct:pf}

\begin{proof}[Proof of \cref{thm:main}]
We prove each point separately.

\textbf{Proof of Assertion 1):} fix $p\geq \sfrac12$.  If $E_p$ is a $p$-Cheeger set then it is isoperimetric for its own volume thanks to \cref{lem:Ep_isoperimetric} and the definition~\eqref{eq:def_V(p)} of $\V$.  Conversely, fixed an isoperimetric set $F$ of volume $V\in \V(p)$, there exists a $p$-Cheeger set $E'_p$ with $|E'_p|=V$.  By \cref{lem:Ep_isoperimetric} we also have that $P(E'_p)=P(F)$.  Thus, since $|E_p|=V$ then $E_p$ must be a $p$-Cheeger set as well.

\textbf{Proof of Assertion 2):} on the one hand the stated continuity of the multivalued map comes from \cref{thm:G_convergence} and \cref{rem_convergence_minimizers}. On the other hand, the injectivity comes from \cref{lem:p->V_injective}.

\textbf{Proof of Assertion 3):} Points~(i), (iii) and (iv) are consequences of \cref{prop:p_to_V}. We only need to prove point~(ii). \cref{prop:unival} and \cref{lem:p->V_injective} imply that $\V$ is univalued and injective. We are left to prove continuity, monotonicity and that the function is onto.

\textit{Continuity:} the continuity of $\V$ in the open interval $(\sfrac 12, 1)$ is a consequence of the $\Gamma$-convergence proved in \cref{thm:G_convergence} in the $L^1_{\mathrm{loc}}$ topology, coupled with the injectivity of \cref{lem:p->V_injective}. Let $\{p_j\}_j\subset (\sfrac 12, 1)$ be any sequence converging to $p \in (\sfrac 12, 1)$. By injectivity, $\V(p_j)$ (resp., $\V(p)$) is the singleton $V_{p_j}$ (resp., $V_p$). We want to prove that $V_{p_j}$ converges to $V_p$.

Consider any subsequence $V_{p_{j_k}}$. For each $k$, let $E_{p_{j_k}}$ be a $p_{j_k}$-Cheeger set, whose volume, by injectivity, is $V_{{p_{j_k}}}$. By \cref{rem_convergence_minimizers} these sets, up to taking a further subsequence $p_{j_{k_n}}$, converge in $L^1_{\text{loc}}$ to a set $E$ which is a $p$-Cheeger set, and whose volume is uniquely determined by the injectivity of $\mathfrak{V}$, that is, $|E|=V_p$. By the $L^1_{\text{loc}}$ convergence happening in the bounded $\Omega$, we have $V_{{p_{j_{k_n}}}} \to V_{p}$. Since from any subsequence of $V_{p_j}$ we can extract a converging sub-subsequence to the same number $V_p$, the whole sequence $V_{p_j}$ converges to the said number.

\textit{Monotonicity:} since $\V$ is a continuous and injective function, it is also strictly monotone. Thus, we only need to show that the function is increasing, and to this aim it is sufficient to show that
\[
\lim_{p\to 1^-}\V(p)>\lim_{p\to \frac12^+}\V(p).
\]
Let us start by taking any sequence $p_j$ converging to $\sfrac 12$. The uniform lower bound $|E_{p_j}| \ge \pi R^2$, and the $\Gamma$-convergence of \cref{thm:G_convergence} give that
\[
E_{p_j} \conv*{L^1_{\mathrm{loc}}}{} E_{\sfrac 12}, \qquad \qquad \text{with $|E_{\sfrac 12}| \ge \pi R^2$},
\]
where $E_{\sfrac 12}$ is a $\sfrac 12$-Cheeger set.  Since all $\sfrac 12$-Cheeger sets are balls contained in $\Omega$, we also have the opposite inequality $|E_{\sfrac 12}| \le \pi R^2$, thus
\[
\lim_{p\to \frac12^+}\V(p) = \pi R^2.
\]
A completely analogous argument works for the limit as $p\to1^-$, taking into account that $|E_p|\le m(\Omega)$ for all $p<1$ thanks to \cref{prop:p_to_V}, and that $m(\Omega) \le |E_1|$ for all $1$-Cheeger sets by its own definition~\eqref{eq:mOm}.

\textit{Surjectivity:} the fact that the function is onto $(\pi R^2, m(\Omega))$ now immediately follows. Indeed, since it is continuous and strictly monotonic, it is a bijection with its own image. Thus, one would only need to show that $\V((\sfrac 12, 1)) = (\pi R^2, m(\Omega))$, but this is trivial from the continuity and the evaluations of the limits
\[
\lim_{p\to \frac12^+} \V(p)= \pi R^2, \qquad \lim_{p\to 1^-} \V(p)= m(\Omega),
\]
we performed in the previous step.
\end{proof}

\subsection{On the case \texorpdfstring{$p>1$}{p ge 1}} It would be desirable to prove that $\V$ is univalued also in the supercritical regime $p>1$, and this would be enough to prove that it would be one-to-one between exponents in $(1, \bar p)$, being 
\[
\bar p := \inf\{\,p\,:\, |E_p|=|\Omega|\,\},
\]
and volumes $V\in (M(\Omega), |\Omega|]$, with the same exact proof we used for the subcritical case $p\in (\sfrac 12, 1)$. At the current stage, we are unable to exclude $\V$ to be multivalued for $p>1$. We do not have any counterexample to it being univalued but we have some hints that it might \emph{not} be in general.

In order to show it to be univalued, it would be enough to show that the function $\K(V)V^{1-p}$ appearing in equation~\eqref{eq:curvature_p_Cheeger} is strictly monotonic. We recall that the increasing function $\K(V)$ is the derivative of $I(V)$, see~\cite[Rem.~4.6]{LS22}. Assuming $I$ to be twice differentiable, we would be led to study the sign of
\begin{equation}\label{eq:compy}
\frac{I''(V)V + (1-p)I'(V)}{V^p}.
\end{equation}
Since $I$ is increasing, refer to~\cite[Thm.~2.3 and Thm.~2.4]{LS22}, and convex for $V\ge \pi R^2$, refer to~\cite[Thm.~2.5 and Rem.~4.6]{LS22}, it is clear that in the subcritical regime $p<1$ the above quantity is always nonnegative. On the contrary, in the supercritical regime $p>1$, a competition between the two terms ensues. In particular, one can cook up sets for which $I''(V)$ vanishes for intervals of volume as large as one wishes, thus making~\eqref{eq:compy} negative. Indeed, given any $\Omega$, it would be enough to glue a very thin and (as) long (as one wishes) rectangle to $\partial \Omega \setminus (\partial \Omega \cap \partial E_1)$, being $E_1$ a maximal $1$-Cheeger set.

\section{A boundary regularity result}\label{sct:pfcor}

The approach implemented here connects three geometric variational problems: $p$-Cheeger sets, sets of prescribed curvature $\kappa$, and isoperimetric sets of volume $V$. As a consequence we can use this connection to deduce general properties of solutions to these problems by analyzing the most convenient one. In this spirit, by studying the properties of $p$-Cheeger sets, we provide a boundary regularity theorem also for minimizers of $F(\kappa)$, and of $I(V)$ in given range of $\kappa$ and $V$. The proof follows the approach in \cite{CAROCCIA20221}. Since it is mostly a straightforward adaptation, we here only sketch it. 

Before proving \cref{cor:app} we need to state and prove the following lemma, which is a key tool in the strategy adopted in~\cite{CAROCCIA20221}. 

\begin{lem}
\label{lem:omega_is_a_ball}
Let $\Omega$ be a simply connected, open,  bounded set in $\R^2$.  Suppose that, for $\kappa>\sfrac1R$, there exists a ball $B\subseteq \Omega$ minimizing $F(\kappa)$. Then $\Omega$ itself is the ball $B$.
\end{lem}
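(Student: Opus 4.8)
The plan is to argue by contradiction: assuming $\Omega\neq B$, I will deform $B$ slightly outward to produce an admissible competitor with strictly smaller energy, contradicting that $B$ minimizes the functional $F(\kappa)$ of~\eqref{eq:pmc}. First I would pin down the radius of $B$. Being a minimizer, $B$ is admissible, so $|B|\ge \pi R^2$ and its radius $\rho$ satisfies $\rho\ge R$; on the other hand $B\subseteq\Omega$ forces $\rho\le R$, since $R$ is the inradius of $\Omega$. Hence $\rho=R$, that is, $B=B_R(c)$ is a maximal inball, with $P(B)=2\pi R$ and $|B|=\pi R^2$.

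Next, assuming $\Omega\neq B$, I would locate a point $z\in\partial B\cap\Omega$ lying on the circle $\partial B$ but \emph{interior} to $\Omega$. Since $\Omega$ is open and cannot be contained in $\overline B$ (otherwise openness would give $\Omega\subseteq B$, whence $\Omega=B$), there is $y\in\Omega$ with $|y-c|>R$. As $\Omega$ is open and connected, hence path-connected, I join the center $c\in\Omega$ to $y$ by a path $\gamma$ in $\Omega$; the function $s\mapsto |\gamma(s)-c|$ runs continuously from $0$ to a value $>R$, so by the intermediate value theorem there is a parameter $s$ with $|\gamma(s)-c|=R$, and $z:=\gamma(s)\in\partial B\cap\Omega$. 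Openness of $\Omega$ then yields a ball $B_\eta(z)\subseteq\Omega$, giving room to deform $\partial B$ outward near $z$ while staying inside $\Omega$.

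The heart of the argument is a first-variation computation. Writing $\partial B$ in polar coordinates about $c$ and deforming it outward to the graph $r=R+t\psi(\theta)$, with $\psi\ge 0$ a smooth bump supported in a small angular window around the direction of $z$ and $t>0$ small enough that the deformed set $B_t$ stays in $\Omega$, I would compute
\[
\frac{\de}{\de t}\Big[P(B_t)-\kappa|B_t|\Big]\Big|_{t=0}
=
\int_0^{2\pi}\psi\,\de\theta-\kappa R\int_0^{2\pi}\psi\,\de\theta
=
(1-\kappa R)\int_0^{2\pi}\psi\,\de\theta<0,
\]
the strict sign coming precisely from the hypothesis $\kappa>R^{-1}$ together with $\int\psi\,\de\theta>0$. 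Moreover $|B_t|>\pi R^2$ for $t>0$, so $B_t$ is admissible, and for small $t>0$ one has $P(B_t)-\kappa|B_t|<P(B)-\kappa|B|=F(\kappa)$, contradicting the minimality of $B$. Therefore $\Omega=B$.

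The point requiring the most care is the existence of the interior boundary point $z$: a maximal inball necessarily touches $\partial\Omega$ at some points, so one must be sure that not \emph{all} of $\partial B$ is ``stuck'' against $\partial\Omega$, and this is exactly where connectedness of $\Omega$ enters. The remaining delicate step is merely bookkeeping: choosing $t$ small enough that $B_t\subseteq\Omega$ and checking the sign and admissibility above. The geometric content is transparent: the circle $\partial B$ has curvature $R^{-1}$, strictly below the prescribed curvature $\kappa$, so enlarging $B$ wherever $\Omega$ leaves room can only lower the energy.
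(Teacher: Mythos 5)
Your proof is correct. The paper runs on the same underlying mechanism --- the circle $\partial B$ has curvature $R^{-1}<\kappa$, which is incompatible with minimality at any free-boundary point --- but implements it differently: after pinning down $|B|=\pi R^2$ exactly as you do, it simply quotes the first-order condition that the free boundary of a minimizer of $F(\kappa)$ must have curvature $\kappa$, concludes $\partial B\cap\Omega=\emptyset$, and finishes with the topological observation that then $\Omega$ is the disjoint union of the open sets $B$ and $\Omega\setminus\overline{B}$, forcing $\Omega=B$ by connectedness. Your version replaces that citation by the explicit one-sided outward perturbation $r=R+t\psi(\theta)$, which is self-contained and in fact slightly more careful: since $|B|=\pi R^2$, the volume constraint $|A|\ge\pi R^2$ in~\eqref{eq:pmc} is \emph{active} at $B$, so inward variations are inadmissible and the clean assertion ``the free-boundary curvature equals $\kappa$'' really rests on precisely the outward variations you compute (which yield the inequality $\kappa_{\partial B}\ge\kappa$ on the free boundary, already a contradiction with $\kappa_{\partial B}=R^{-1}<\kappa$). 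Your connectedness/intermediate-value construction of the point $z\in\partial B\cap\Omega$ is sound and shows that plain connectedness suffices where the statement hypothesizes simple connectedness; your contrapositive arrangement (assume $\Omega\neq B$, exhibit a free-boundary point, perturb) and the paper's (show the free boundary is empty, then conclude topologically) are logically interchangeable. What the paper's proof buys is brevity, since the constant-curvature property of minimizers is already part of its toolkit; what yours buys is independence from that machinery and an argument that visibly respects the one-sided nature of the constraint.
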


\begin{proof}
Since $B\subseteq \Omega$ attains $F(\kappa)$, as defined in~\eqref{eq:pmc}, then $|B|\geq \pi R^2$. Being $B$ a ball, we necessarily have that it is an inball of $\Omega$, i.e., $|B|=\pi R^2$. Let us argue by contradiction. Up to a translation, we can assume that $\partial B\cap \Omega\neq \emptyset$. Then, the curvature of the free boundary $\partial B\cap \Omega\neq \emptyset$ must be $\kappa>\sfrac1R$, against the fact that the curvature of $\partial B$ is $\frac 1R$. Therefore, $\partial B\cap \Omega=\emptyset$ and thus, since $\Omega$ is simply connected, $B=\Omega$.
\end{proof}

\begin{proof}[Proof of \cref{cor:app}]
We split the proof in three steps. First, we prove the $C^{1,\alpha}$ regularity for sets satisfying a). Second, we prove the dimensional lower bound on the contact surface, again for sets satisfying a) or b). These two steps follow the strategy first used in~\cite{CAROCCIA20221}. Third, we exploit \cref{thm:main} to apply the first two steps for sets satisfying c), and d).

\textbf{Step one: }\textit{$C^{1,\alpha}$ regularity of the boundary}. Fix $\kappa\in [1/R, +\infty)$ and let $E$ be a set with prescribed  curvature $\kappa$. Assume the contact surface $\partial E\cap \partial\Omega$ to be nonempty, as otherwise there is nothing to prove, and fix $x$ in it. Without loss of generality, up to a translation and a rotation, we can assume that $x=0$ and that $\nu_E(x)=e_2$. Since $\Omega$ is a Jordan domain, so it is $E$ (see~\cite[Prop.~3.8~(ii)]{LS22}), thus we can describe locally their boundaries through continuous functions. In particular, let $f_E, f_{\Omega}$ be the functions satisfying
\begin{align*}
E\cap ([-r,r]\times \R)&=\{(x,y)\in \R^2 \ | \ -L \leq y\leq f_E(x)\}\\
\Omega\cap ([-r,r]\times \R)&=\{(x,y)\in \R^2 \ | \ -L \leq y\leq f_\Omega(x)\}
\end{align*}
for suitable $r,L$. Let us consider the space of $H^1$ functions on $(-r,r)$ that agree with $f_E$ on the boundary, and are bounded from above by $f_\Omega$, that is
\begin{align*}
\mathcal{C}&:=\left\{ w\in H^1(-r,r) \ | \ w-f_E \in H^1_0(-r,r),  \ w\leq f_{\Omega} \ \text{in $(-r,r)$} \right\} 
\end{align*}
and the prescribed curvature functional
\begin{align*}
G(w)&:=\int_{-r}^{r} \sqrt{1+(w')^2}\, \mathrm{d} x - \kappa\int_{-r}^r w\, \mathrm{d}x.
\end{align*}
It is immediate to see that $f_E$ minimizes such a functional among functions in $\mathcal{C}$. By classical theory of obstacle problems we have then that $f_E\in C^1$.  Now,  by arguing as in Step two and Step three of the proof of \cite[Lem.~5.1]{CAROCCIA20221} we achieve $f_E \in C^{1,\alpha}$.

\textbf{Step two: }\textit{Dimensional lower bound on the contact surface.} Fix again $\kappa\in [1/R, +\infty)$ and let $E$ be a set with prescribed  curvature $\kappa$, and, just as before, assume that the contact surface is nonempty. By Step one we have that around any $x\in \partial E\cap \partial \Omega$ the set $E$ has boundary of class $C^{1,\alpha}$. Let $\Gamma:=\partial E\cap \partial\Omega$, and assume by contradiction that $\mathcal{H}^{\alpha}(\partial E\cap \partial\Omega)=0$. Then $\partial E$ has constant curvature (equal to $\kappa$) on $\R^2\setminus \Gamma$ and $\mathcal{H}^{\alpha}(\Gamma)=0$.  We thus invoke \cite[Thm.~4.1]{CAROCCIA20221} to conclude that $\partial E$ has constant curvature (equal to $\kappa$) on $\R^2$. Hence, $E$ must be a ball. On the one hand, if $\kappa = \sfrac 1R$, this immediately contradicts case b). On the other hand, in case a) when $\kappa > \sfrac 1R$, we can use \cref{lem:omega_is_a_ball}, finding that $\Omega$ is a ball, against our starting hypothesis. Hence, in both cases a) and b) it must hold $\mathcal{H}^{\alpha}(\partial E\cap \partial \Omega)>0$.

\textbf{Step three:} Steps one and two above prove the validity of the thesis of \cref{cor:app} for sets satisfying either a) or b). In the following, we reason for sets $E$ satisfying either c) or d) such that $E\neq \Omega$, as otherwise there is nothing to prove.

The equivalence established by Assertion i) of \cref{thm:structure_LS22} allows to apply Steps one and two also to sets satisfying c). Finally, combining \cref{thm:main} with \cref{thm:structure_LS22}, we can associate to each $p$-Cheeger set $E_p$ with $p>\sfrac 12$ a curvature $\kappa_{E_p} \in[\sfrac1R,+\infty)$ such that $E_p$ attains $F(\kappa_{E_p})$ implying the validity of the boundary regularity in this case, settling point d).
\end{proof}

\begin{rem}
Points a) and b) are both needed in order to prove points c) and d). Indeed, there are sets $\Omega$ for which the following occurs. There exist an exponent $\hat p> \sfrac 12$ and a volume $\hat V> \pi R^2$, where as usual $R$ is the inradius of $\Omega$, such that $p$-Cheeger sets $E$ for $p\in (\sfrac 12, \hat p)$ and isoperimetric sets $E$ of volume $V\in (\pi R^2, \hat V)$ have as curvature of $\partial E \cap \Omega$ exactly $\sfrac 1R$, and they are not balls since their volume is greater than the one of an inball. In particular, this occurs whenever $\K^{-1}(\sfrac 1R)$ does not reduce to the lone volume $\pi R^2$. Equivalently, $\Omega$ does not have a unique inball, refer to~\cite[Thm.~2.3~(iii)]{LS22}, for instance whenever it is a rectangle (this example is explicitly treated in~\cite[Sect.~3]{PS17}).
\end{rem}

\begin{rem}
We remark that the result is sharp, in the sense that one can build sets $\Omega$ of class $C^{1,\alpha}$ such that for some $\kappa> \sfrac 1R$, one has sets $E\subset \Omega$ with prescribed curvature $\kappa$ with
\[
\mathrm{dim}_\mathcal{H}(\partial E \cap \partial \Omega) = \alpha.
\] 
This can be seen using the criterion proved in~\cite{Sar21}, arguing as in~\cite[Sect.~6]{CAROCCIA20221}.
\end{rem}

%



\bibliographystyle{plainurl}

\bibliography{p-cheeger_iso}

\begin{thebibliography}{10}

\bibitem{AC09}
F.~Alter and V.~Caselles.
\newblock Uniqueness of the {C}heeger set of a convex body.
\newblock {\em Nonlinear Anal.}, 70(1):32--44, 2009.
\newblock \href {https://doi.org/10.1016/j.na.2007.11.032}
  {\path{doi:10.1016/j.na.2007.11.032}}.

\bibitem{ACV05}
F.~Alter, V.~Caselles, and A.~Chambolle.
\newblock A characterization of convex calibrable sets in {$\mathbb{R}^N$}.
\newblock {\em Math. Ann.}, 332(2):329--366, 2005.
\newblock \href {https://doi.org/10.1007/s00208-004-0628-9}
  {\path{doi:10.1007/s00208-004-0628-9}}.

\bibitem{Avi97}
A.~Avinyo.
\newblock Isoperimetric constants and some lower bounds for the eigenvalues of
  the {$p$}-{L}aplacian.
\newblock {\em Nonlinear Anal.}, 30(1):177--180, 1997.
\newblock \href {https://doi.org/10.1016/S0362-546X(96)00229-5}
  {\path{doi:10.1016/S0362-546X(96)00229-5}}.

\bibitem{BP18}
V.~Bobkov and E.~Parini.
\newblock On the higher {C}heeger problem.
\newblock {\em J. Lond. Math. Soc. (2)}, 97(3):575--600, 2018.
\newblock \href {https://doi.org/10.1112/jlms.12119}
  {\path{doi:10.1112/jlms.12119}}.

\bibitem{BP21}
V.~Bobkov and E.~Parini.
\newblock On the {C}heeger problem for rotationally invariant domains.
\newblock {\em Manuscripta Math.}, 166(3--4):503--522, 2021.
\newblock \href {https://doi.org/10.1007/s00229-020-01260-9}
  {\path{doi:10.1007/s00229-020-01260-9}}.

\bibitem{Bra02book}
A.~Braides.
\newblock {\em {$\Gamma$}-convergence for beginners}, volume~22 of {\em Oxford
  Lecture Series in Mathematics and its Applications}.
\newblock Oxford University Press, Oxford, 2002.
\newblock \href {https://doi.org/10.1093/acprof:oso/9780198507840.001.0001}
  {\path{doi:10.1093/acprof:oso/9780198507840.001.0001}}.

\bibitem{Can22}
Antonio Ca\~{n}ete.
\newblock Cheeger sets for rotationally symmetric planar convex bodies.
\newblock {\em Results Math.}, 77(1):9, 2022.
\newblock \href {https://doi.org/10.1007/s00025-021-01539-7}
  {\path{doi:10.1007/s00025-021-01539-7}}.

\bibitem{caroccia2017cheeger}
M.~Caroccia.
\newblock Cheeger {$N$}-clusters.
\newblock {\em Calc. Var. Partial Differential Equations}, 56(2):30, 2017.
\newblock \href {https://doi.org/10.1007/s00526-017-1109-9}
  {\path{doi:10.1007/s00526-017-1109-9}}.

\bibitem{CAROCCIA20221}
M.~Caroccia and S.~Ciani.
\newblock Dimensional lower bounds for contact surfaces of {C}heeger sets.
\newblock {\em J. Math. Pures Appl. (9)}, 157:1--44, 2022.
\newblock \href {https://doi.org/10.1016/j.matpur.2021.11.010}
  {\path{doi:10.1016/j.matpur.2021.11.010}}.

\bibitem{caroccia2019cheeger}
M.~Caroccia and S.~Littig.
\newblock The {C}heeger-{$N$}-problem in terms of {BV}-functions.
\newblock {\em J. Convex Anal.}, 26(1):33--47, 2019.
\newblock URL: \url{https://www.heldermann.de/JCA/JCA26/JCA261/jca26003.htm}.

\bibitem{caroccia2015note}
M.~Caroccia and R.~Neumayer.
\newblock A note on the stability of the {C}heeger constant of {$N$}-gons.
\newblock {\em J. Convex Anal.}, 22(4):1207--1214, 2015.
\newblock URL: \url{https://www.heldermann.de/JCA/JCA22/JCA224/jca22063.htm}.

\bibitem{CCN10}
V.~Caselles, A.~Chambolle, and M.~Novaga.
\newblock Some remarks on uniqueness and regularity of {C}heeger sets.
\newblock {\em Rend. Semin. Mat. Univ. Padova}, 123:191--201, 2010.
\newblock \href {https://doi.org/10.4171/RSMUP/123-9}
  {\path{doi:10.4171/RSMUP/123-9}}.

\bibitem{Che70}
J.~Cheeger.
\newblock A lower bound for the smallest eigenvalue of the {L}aplacian.
\newblock In {\em Problems in analysis ({P}apers dedicated to {S}alomon
  {B}ochner, 1969)}, pages 195--199. Princeton Univ. Press, Princeton, N. J.,
  1970.

\bibitem{FMP09a}
A.~Figalli, F.~Maggi, and A.~Pratelli.
\newblock A note on {C}heeger sets.
\newblock {\em Proc. Amer. Math. Soc.}, 137(6):2057--2062, 2009.
\newblock \href {https://doi.org/10.1090/S0002-9939-09-09795-0}
  {\path{doi:10.1090/S0002-9939-09-09795-0}}.

\bibitem{FPSS22}
V.~Franceschi, A.~Pinamonti, G.~Saracco, and G.~Stefani.
\newblock The {C}heeger problem in abstract measure spaces.
\newblock \href {http://arxiv.org/abs/2207.00482} {\path{arXiv:2207.00482}}.

\bibitem{Fto21}
I.~Ftouhi.
\newblock On the {C}heeger inequality for convex sets.
\newblock {\em J. Math. Anal. Appl.}, 504(2):125443, 2021.
\newblock \href {https://doi.org/10.1016/j.jmaa.2021.125443}
  {\path{doi:10.1016/j.jmaa.2021.125443}}.

\bibitem{Fto20}
I.~Ftouhi.
\newblock Complete systems of inequalities relating the perimeter, the area and
  the {C}heeger constant of planar domains.
\newblock {\em Commun. Contemp. Math.}, page 2250054, In Press.
\newblock \href {https://doi.org/10.1142/S0219199722500547}
  {\path{doi:10.1142/S0219199722500547}}.

\bibitem{FMP22}
I.~Ftouhi, A.~L. Masiello, and G.~Paoli.
\newblock Sharp inequalities involving the {C}heeger constant of planar convex
  sets.
\newblock \href {http://arxiv.org/abs/2206.13158} {\path{arXiv:2206.13158}}.

\bibitem{Fus15}
N.~Fusco.
\newblock The quantitative isoperimetric inequality and related topics.
\newblock {\em Bull. Math. Sci.}, 5(3):517--607, 2015.
\newblock \href {https://doi.org/10.1007/s13373-015-0074-x}
  {\path{doi:10.1007/s13373-015-0074-x}}.

\bibitem{Ind20}
E.~Indrei.
\newblock On the equilibrium shape of a crystal.
\newblock \href {http://arxiv.org/abs/2008.02238} {\path{arXiv:2008.02238}}.

\bibitem{KL06}
B.~Kawohl and T.~Lachand-Robert.
\newblock Characterization of {C}heeger sets for convex subsets of the plane.
\newblock {\em Pacific J. Math.}, 225(1):103--118, 2006.
\newblock \href {https://doi.org/10.2140/pjm.2006.225.103}
  {\path{doi:10.2140/pjm.2006.225.103}}.

\bibitem{KLV19}
D.~Krej\v{c}i\v{r}\'{\i}k, G.~P. Leonardi, and P.~Vlachopulos.
\newblock The {C}heeger constant of curved tubes.
\newblock {\em Arch. Math. (Basel)}, 112(4):429--436, 2019.
\newblock \href {https://doi.org/10.1007/s00013-018-1282-x}
  {\path{doi:10.1007/s00013-018-1282-x}}.

\bibitem{KP11}
D.~Krej\v{c}i\v{r}\'ik and A.~Pratelli.
\newblock The {C}heeger constant of curved strips.
\newblock {\em Pacific J. Math.}, 254(2):309--333, 2011.
\newblock \href {https://doi.org/10.2140/pjm.2011.254.309}
  {\path{doi:10.2140/pjm.2011.254.309}}.

\bibitem{Leo15}
G.~P. Leonardi.
\newblock An overview on the {C}heeger problem.
\newblock In {\em New {T}rends in {S}hape {O}ptimization}, volume 166 of {\em
  Internat. Ser. Numer. Math.}, pages 117--139. Springer Int. Publ., 2015.
\newblock \href {https://doi.org/10.1007/978-3-319-17563-8_6}
  {\path{doi:10.1007/978-3-319-17563-8_6}}.

\bibitem{LNS17}
G.~P. Leonardi, R.~Neumayer, and G.~Saracco.
\newblock The {C}heeger constant of a {J}ordan domain without necks.
\newblock {\em Calc. Var. Partial Differential Equations}, 56:164, 2017.
\newblock \href {https://doi.org/10.1007/s00526-017-1263-0}
  {\path{doi:10.1007/s00526-017-1263-0}}.

\bibitem{LP16}
G.~P. Leonardi and A.~Pratelli.
\newblock On the {C}heeger sets in strips and non-convex domains.
\newblock {\em Calc. Var. Partial Differential Equations}, 55(1):15, 2016.
\newblock \href {https://doi.org/10.1007/s00526-016-0953-3}
  {\path{doi:10.1007/s00526-016-0953-3}}.

\bibitem{LS20}
G.~P. Leonardi and G.~Saracco.
\newblock Minimizers of the prescribed curvature functional in a {J}ordan
  domain with no necks.
\newblock {\em ESAIM Control Optim. Calc. Var.}, 26:76, 2020.
\newblock \href {https://doi.org/10.1051/cocv/2020030}
  {\path{doi:10.1051/cocv/2020030}}.

\bibitem{LS22}
G.~P. Leonardi and G.~Saracco.
\newblock The isoperimetric problem in $2$d domains without necks.
\newblock {\em Calc. Var. Partial Differential Equations}, 61(2):56, 2022.
\newblock \href {https://doi.org/10.1007/s00526-021-02153-9}
  {\path{doi:10.1007/s00526-021-02153-9}}.

\bibitem{Mag12book}
F.~Maggi.
\newblock {\em Sets of {F}inite {P}erimeter and {G}eometric {V}ariational
  {P}roblems}, volume 135 of {\em Cambridge Studies in Advanced Mathematics}.
\newblock Cambridge University Press, Cambridge, 2012.
\newblock \href {https://doi.org/10.1017/CBO9781139108133}
  {\path{doi:10.1017/CBO9781139108133}}.

\bibitem{Par11}
E.~Parini.
\newblock An introduction to the {C}heeger problem.
\newblock {\em Surv. Math. Appl.}, 6:9--21, 2011.
\newblock URL: \url{http://www.utgjiu.ro/math/sma/v06/v06.html}.

\bibitem{Par17}
E.~Parini.
\newblock Reverse {C}heeger inequality for planar convex sets.
\newblock {\em J. Convex Anal.}, 24(1):107--122, 2017.
\newblock URL: \url{https://www.heldermann.de/JCA/JCA24/JCA241/jca24009.htm}.

\bibitem{PS17}
A.~Pratelli and G.~Saracco.
\newblock On the generalized {C}heeger problem and an application to 2d strips.
\newblock {\em Rev. Mat. Iberoam.}, 33(1):219--237, 2017.
\newblock \href {https://doi.org/10.4171/RMI/934} {\path{doi:10.4171/RMI/934}}.

\bibitem{Sar21}
G.~Saracco.
\newblock A sufficient criterion to determine planar self-{C}heeger sets.
\newblock {\em J. Convex Anal.}, 28(3), 2021.
\newblock URL: \url{https://www.heldermann.de/JCA/JCA28/JCA283/jca28055.htm}.

\bibitem{SZ97}
E.~Stredulinsky and W.~P. Ziemer.
\newblock Area minimizing sets subject to a volume constraint in a convex set.
\newblock {\em J. Geom. Anal.}, 7(4):653--677, 1997.
\newblock \href {https://doi.org/10.1007/BF02921639}
  {\path{doi:10.1007/BF02921639}}.

\end{thebibliography}

\end{document}